\newcommand{\N}{\mathbb{N}}
\newcommand{\lp}{\left(}
\newcommand{\rp}{\right)}
\newcommand{\tx}{\text}
\newcommand{\txbf}{\textbf}
\newtheorem{theorem}{Theorem}
\newtheorem{lemma}{Lemma}
\newtheorem{cor}{Corollary}
\newtheorem{prop}{Proposition}
\newtheorem*{theorem*}{Theorem}
\newtheorem*{cor*}{Corollary}
\newtheorem*{lemma*}{Lemma}
\newtheorem*{example*}{Example}
\theoremstyle{definition}
\newtheorem{definition}{Definition}
\newtheorem*{def*}{Definition}
\newtheorem{remark}{Remark}
\newtheorem*{remark*}{Remark}
\begin{document}
 
 
\title{Parking Functions on Directed Graphs and Some Directed Trees}

\author{Westin King}
\email[corresponding author]{westin\_king@alumni.baylor.edu}

\author{Catherine Yan}
\email{cyan@math.tamu.edu}

\address{Department of Mathematics, Texas A\&M University, College Station, TX 77843, United States}


\begin{abstract}
Classical parking functions can be defined in terms of drivers with preferred parking spaces searching a linear parking lot for an open parking spot. We may consider this linear parking lot as a collection of $n$ vertices (parking spots) arranged in a directed path. We generalize this notion to allow for more complicated ``parking lots'' and define parking functions on arbitrary directed graphs. We then consider a relationship proved by Lackner and Panholzer between parking functions on trees and ``mapping digraphs'' and we show that a similar relationship holds when edge orientations are reversed. 
\end{abstract}
\maketitle

\section{Introduction}\label{sec:introduction}

Parking functions were first defined by Konheim and Weiss \cite{KONHEIM1966} during their study of the linear probing solution to collisions on hash tables. The authors described a sequence of $n$ drivers attempting to park randomly along a one-way street. If the spot a driver attempts to park in is occupied, she drives to the next available spot and parks. As we are interested in the number of ways such a procedure results in all $n$ drivers parking, we may consider the initial checked spot as a preferred parking place, rather than a randomly chosen spot. Let $s \in [n]^n$ and consider a directed path with vertex set $[n]$ and edge orientations $i \rightarrow i+1$. One-by-one the drivers attempt to park according to the following process:

\begin{itemize}
\item[1)] Driver $i$ begins at vertex $s_i$. 
\item[2)] If the current vertex is unoccupied, the driver parks there. If it is occupied, the driver drives to the next vertex, following edge orientation, and repeats Step 2.
\item[3)] If she parks, the process continues with driver $i+1$ attempting to park at vertex $s_{i+1}$. Otherwise, the process terminates.
\end{itemize}

A sequence $s$ such that all $n$ drivers successfully park is called a \emph{classical parking function} of length $n$. With some consideration, one can see that as long as there is no $i$ such that too many drivers prefer a vertex from $\{i,i+1,i+2,\hdots, n\}$, then all drivers will park. This is formulated in an alternative definition of parking functions:

\begin{definition}[Parking Functions]\label{def:classical}
A \emph{classical parking function} of length $n$ is a sequence $s \in [n]^n$ such that for all $i \in [n]$,
\[
|\{j : s_j \geq i\}| \leq n-i+1.
\]
Additionally, we may consider the case when $m \leq n$ drivers attempt to park. We call these $(n,m)$-parking functions and the definition is the same as in the classical case for $s \in [n]^m$.

\end{definition}

It is well-known that there are $(n+1)^{n-1}$ parking functions of length $n$, while there are $(n-m+1)(n+1)^{m-1}$ classical $(n,m)$-parking functions \cite{CAMERON2008}. Classical parking functions have appeared throughout combinatorics as chains in the noncrossing lattice, in enumeration of hyperplane arrangements, in noncrossing partitions, and in tree enumeration (see \cite{GESSEL2006,EC2,YAN2015} as well as references herein).

Parking functions have seen many generalizations: $G$-parking functions \cite{POSTNIKOV2004}, $\txbf{u}$-parking functions \cite{KUNG2003}, parking sequences \cite{EHRENBORG2016},  rational parking functions \cite{ARMSTRONG2016}, and those defined on tree-shaped parking lots \cite{BUTLER2017,LACKNER2016}. In this paper, we extend the ``drivers searching for a parking spot'' analogy from the trees in \cite{LACKNER2016} to general digraphs and give a description that generalizes the set definition of the classical parking functions in Section \ref{sec:definitions}. In Sections \ref{sec:lackner} and \ref{sec:comparing}, we closely follow the results of Lackner and Panholzer \cite{LACKNER2016} and show that many of their theorems have analogues when the edge orientations of the tree and mapping digraphs are reversed. Finally, we conclude with some new research directions and propose extensions to the concepts of increasing and prime parking functions.

This paper is, in part, an expansion of Sections 2 and 4 of \cite{KINGFPSAC2018}.

\section{Parking Functions on Digraphs}\label{sec:definitions}

One interpretation of classical parking functions is of drivers attempting to park in a preferred spot along a street and parking in the first available spot they find afterwards. We may extend this notion to general digraphs by allowing driver to choose which out-edge to travel along. More formally,

\begin{definition}[Parking Process]\label{def:parkingprocess}
Pick $n,m$ such that $0 \leq m \leq n$. Let $s \in [n]^m$ and $D$ be a digraph with vertex set $[n]$. One-by-one $m$ drivers attempt to park according to the following process:

\begin{itemize}
\item[1)] Driver $i$ begins at vertex $s_i$. 
\item[2)] If the current vertex is unoccupied, the driver parks there. If it is occupied, the driver chooses a vertex in the neighborhood of the current one and drives there.
\item[3)] The driver repeats step 2) until she either parks, and the next driver enters, or is unable to find an available parking space, and the process terminates.
\end{itemize}
\end{definition}

When the maximum outdegree of a vertex in $D$ is 1, the parking process is deterministic. In the general case, however, drivers must ``choose" which edge to take in search of a parking spot. Our interest lies in the possibility of all drivers parking, so we give the following definition as our parking function generalization:

\begin{definition}\label{def:digraphpf}
For a sequence $s \in [n]^m$ and digraph $D$ with vertex set $[n]$, we say that $s$ is a \emph{parking function on $D$} if it is possible for all of the $m$ drivers to park following the parking process. If $s$ is a parking function on $D$, we call the pair $(D,s)$ an $(n,m)$\emph{-parking function}.
\end{definition}

Figure \ref{fig:digraphex} gives an example of an $(n,n)$-parking function. Drivers 2 and 5 are the only drivers who may make a choice of which edge to travel along and all drivers can park as long as at least one of those drivers uses the edge $(1,4)$ during parking. 

\begin{figure}
\begin{center}
\begin{tikzpicture}
\node[circle,draw,inner sep=3pt] (a) at (0,0) {$1$};
\node[circle,draw,inner sep=3pt] (d) at (-1.5,1.5) {$2$};
\node[circle,draw,inner sep=3pt] (e) at (1.5,1.5) {$3$};
\node[circle,draw,inner sep=3pt] (f) at (1.5,-1.5) {$4$};
\node[circle,draw,inner sep=3pt] (g) at (-1.5,-1.5) {$5$};

\draw[arrows={->[scale=1.5]}] (e) -- (d);
\draw[arrows={->[scale=1.5]}] (a) -- (e);
\draw[arrows={->[scale=1.5]}] (a) -- (d);
\draw[arrows={->[scale=1.5]}] (f) -- (g);
\draw[arrows={->[scale=1.5]}] (a) -- (f);
\draw[arrows={->[scale=1.5]}] (d) -- (g);
\end{tikzpicture}
\caption{A digraph with parking function $s=(1,1,3,2,1)$.}
\label{fig:digraphex}
\end{center}
\end{figure}
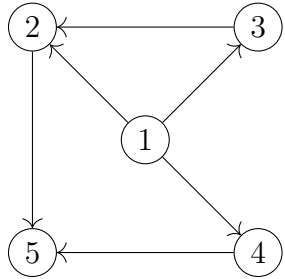

Definition \ref{def:digraphpf} is clearly a generalization of the classical parking function case, but it is sometimes difficult to apply practically, so we now consider an equivalent definition that is more useful. For $i,j \in [n]$, we say $i \preceq_D j$ if and only if there exists a directed path from $i$ to $j$ in $D$. By convention, we will say that $i \preceq_D i$, making $\preceq_D$ a quasiorder on the vertices of $D$. If we wish to consider when $i \neq j$, we say $i \prec_D j$. For vertex $i$, define the set of vertices \emph{reachable from} $i$ as

\[
R_D(i) = \{ j \in [n] : i \preceq_D j \}.
\]
Then for any $A \subseteq [n]$ define the \emph{reachable set of $A$} as

\[
R_D(A) = \bigcup\limits_{i \in A}R_D(i).
\]

\begin{theorem}\label{thm:hall2}
Let $D$ be a digraph with vertex set $[n]$ and $s \in [n]^m$. Let $C=\{C_1, \hdots C_m\}$ be the set of cars, indexed such that $C_i$ prefers spot $s_i$. Then $s$ is a parking function on $D$ if and only if for all $A \subseteq C$ we have 

\[
|A| \leq |\bigcup\limits_{s_i : C_i \in A}R_D(s_i)|.
\]
\end{theorem}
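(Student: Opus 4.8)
The statement is a Hall-type (marriage) theorem: a successful run of the parking process amounts to a system of distinct representatives (SDR) for the cars among the vertices, where $C_i$ is permitted to represent any vertex of $R_D(s_i)$, and the displayed inequality is exactly Hall's condition for such an SDR to exist. So the real content is that the \emph{sequential} nature of the parking process never obstructs realizing an SDR. The plan is to prove the easy implication directly and the converse by induction on $m$, processing the cars in order and carrying a Hall-type invariant for the not-yet-parked cars.

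For the easy direction, suppose $s$ is a parking function on $D$ and fix a successful run. Car $C_i$ comes to rest at some vertex $t_i$; the $t_i$ are pairwise distinct (at most one car per spot), and $t_i$ lies on the walk $C_i$ drove out of $s_i$, so $s_i \preceq_D t_i$, i.e. $t_i \in R_D(s_i)$. Then for any $A \subseteq C$ the set $\{t_i : C_i \in A\}$ consists of $|A|$ distinct vertices of $\bigcup_{C_i \in A} R_D(s_i)$, which is the claimed inequality.

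For the converse, assume the inequality holds for all $A\subseteq C$ (equivalently, by Hall's marriage theorem, an SDR exists), and induct on $m$, processing $C_1,\dots,C_m$ in order and maintaining the invariant: writing $O_{j-1}=\{t_1,\dots,t_{j-1}\}$ for the spots taken by the first $j-1$ parked cars, $|A|\le |R_D(\{s_i:C_i\in A\})\setminus O_{j-1}|$ for every $A\subseteq\{C_j,\dots,C_m\}$. The base case $j=1$ is the hypothesis. In the inductive step, if $s_j\notin O_{j-1}$ then $C_j$ is forced to (and may) park at $s_j$; that the invariant survives deleting $C_j$ and the spot $s_j$ follows from a tight-set argument: if some $A'$ were newly violating it would be tight and satisfy $s_j\in R_D(\{s_i:C_i\in A'\})$, whence transitivity of $\preceq_D$ (so $R_D(s_j)\subseteq R_D(\{s_i:C_i\in A'\})$) would make $\{C_j\}\cup A'$ violate the invariant already at step $j$. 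If instead $s_j\in O_{j-1}$, the car must travel, and the set of spots at which it can legally stop is $\mathrm{Reach}_j=\{v\notin O_{j-1}: \text{there is a walk } s_j\rightsquigarrow v \text{ all of whose vertices before } v \text{ lie in } O_{j-1}\}$. One checks $\mathrm{Reach}_j\neq\emptyset$ (else $R_D(s_j)\subseteq O_{j-1}$, contradicting the invariant for $A=\{C_j\}$) and that $R_D(\mathrm{Reach}_j)\setminus O_{j-1}=R_D(s_j)\setminus O_{j-1}$ (follow any walk out of $s_j$ to its first vertex outside $O_{j-1}$). Then choose $t_j$ by applying Hall's theorem to the auxiliary bipartite graph joining $C_j$ only to $\mathrm{Reach}_j$ and each later $C_i$ to $R_D(s_i)\setminus O_{j-1}$: the invariant at step $j$ together with the two facts about $\mathrm{Reach}_j$ force this graph to have a matching saturating $\{C_j,\dots,C_m\}$, and the edge at $C_j$ in such a matching supplies a $t_j\in\mathrm{Reach}_j$ that restores the invariant at step $j+1$. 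The invariant at step $m+1$ says all $m$ cars have parked.

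The crux is the second case of the inductive step: a car whose preferred spot is occupied cannot in general reach every still-free vertex of $R_D(s_j)$ — only those reachable through already-occupied vertices — so one cannot simply quote an SDR and route cars to it, and must verify that a safe $t_j$ always exists. What makes both the tight-set argument and the auxiliary-matching argument close is precisely the transitivity of the reachability quasiorder, i.e. $R_D(R_D(A))=R_D(A)$; the rest is bookkeeping with Hall's condition.
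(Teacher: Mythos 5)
Your proposal is correct, and both directions rest on the same pillars as the paper's proof (Hall's theorem plus the observation that a car can legally stop exactly at the first unoccupied vertex of a walk through occupied ones), but your converse runs differently. The paper fixes a single saturating matching $\{C_i,v_i\}$ once and for all, parks $C_i$ at the first unoccupied vertex $x$ on a walk from $s_i$ toward $v_i$ (existence is immediate because $v_i$ itself is unoccupied), and repairs the matching by one swap: if $x=v_k$ for a later car, reassign $C_k$ to $v_i$, which is legal since $s_k\preceq_D x\preceq_D v_i$. You instead carry the residual Hall condition as an invariant and re-invoke Hall's theorem at every step with $C_j$'s neighborhood cut down to $\mathrm{Reach}_j$. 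That works, but the one assertion you leave compressed is the real content of your route: that the auxiliary bipartite graph still satisfies Hall. It does, and for the reason you name: if Hall failed for some $A\ni C_j$, then writing $A'=A\setminus\{C_j\}$, the invariant gives $|A'|\le|N(A')|$ while the failure gives $|\mathrm{Reach}_j\cup N(A')|\le|A'|$, forcing $\mathrm{Reach}_j\subseteq N(A')$; since $N(A')$ is closed under $R_D$ modulo $O_{j-1}$ (transitivity of $\preceq_D$), this drags all of $R_D(s_j)\setminus O_{j-1}=R_D(\mathrm{Reach}_j)\setminus O_{j-1}$ into $N(A')$ and contradicts the invariant for $A$ itself. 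The trade-off: the paper's swap argument is shorter and needs no re-verification of Hall, while your invariant formulation makes explicit that after each car parks the remaining cars and free spots again form an instance of the same problem, which is a genuinely useful structural statement (it is essentially Corollary~\ref{cor:hall} for the residual configuration) even though it costs you the extra matching argument at each step.
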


\begin{proof}
If we let $B$ be the bipartite graph with vertex set $C \cup [n]$ where $\{C_i,j\} \in E(B)$ if and only if $s_i \preceq_D j$, then by Hall's Theorem, we are claiming that $s$ is a parking function if and only if there exists a matching on $B$ saturating $C$. If $s$ is a parking function, then park the drivers in some manner. Suppose $C_i$ is parked on $v_i$ for each $i$. Then, since we must have $s_i \preceq_D v_i$, the edge $\{C_i,v_i\}$ is in $B$. These edges define a matching that saturates $C$.

The other direction is not as clear because the parking process requires drivers to park in the first empty spot they find. We use a matching $M = \{ \{C_i,v_i\}\}_{i=1}^m$ to determine a (not necessarily unique) way of parking on $D$. 

The iterative process is the same for each $C_i$, starting at $i=1$. At step $i$, pick any $x$ with $s_i \preceq_D x \preceq_D v_i$ such that there exists a walk $s_i=y_0 \rightarrow y_1 \rightarrow \hdots \rightarrow y_k \rightarrow x$ such that the $y_j$'s are spots occupied by cars in previous iterations. If no such $y_0$ exists, then $x = s_i$. Park $C_i$ in $x$ and delete $\{C_i,v_i\}$ from $M$. If $\{C_j,x\} \in M$ for some $j >i$, then replace this edge with $\{C_j,v_i\}$. Now repeat with $C_{i+1}$.

In each step, the vertex $x$ is the first unoccupied vertex along some walk between $s_i$ and the vertex with which $C_i$ is matched. At least one such $x$ exists because, at the start of step $i$, $C_i$ is matched with a vertex which is not occupied by any car. At the end of step $i$, we know the updated $M$ is a matching saturating $\{C_\ell\}_{\ell>i}$ because we know $s_j \preceq_D v_j = x$ from the edge $\{C_j,v_j\}$ and $x \preceq_D v_i$ by our choice of $x$. Thus, $s_j \preceq_D v_i$, so the edge $\{C_j,v_i\}$ is in $B$.
\end{proof}

Rather than considering subsets of the drivers, we may reformulate the statement in terms of the number of drivers wanting to park in various ``regions'' of the graph (c.f. Definition \ref{def:classical}).

\begin{cor}\label{cor:hall}
Let $D$ be a digraph with vertex set $[n]$ and $s \in [n]^m$. Then $s$ is a parking function on $D$ if and only if for all $B \subseteq [n]$ we have 

\[
|\{C_i : s_i \in R_D(B)\}| \leq |R_D(B)|.
\]
\end{cor}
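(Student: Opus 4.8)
The plan is to derive Corollary \ref{cor:hall} directly from Theorem \ref{thm:hall2} by showing that the two families of inequalities are equivalent. The key observation is that the condition in Theorem \ref{thm:hall2} ranges over all subsets $A \subseteq C$ of the cars, while the condition here ranges over all subsets $B \subseteq [n]$ of the vertices, and the passage between the two is mediated by the operation $B \mapsto R_D(B)$ together with the ``pullback'' $B \mapsto \{C_i : s_i \in R_D(B)\}$. The main point is that $R_D$ is idempotent in the appropriate sense: since $\preceq_D$ is a quasiorder (transitive and reflexive), we have $R_D(R_D(B)) = R_D(B)$ for every $B \subseteq [n]$.

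First I would prove the forward direction. Assume $s$ is a parking function on $D$, and let $B \subseteq [n]$ be arbitrary. Set $A = \{C_i : s_i \in R_D(B)\}$. Applying Theorem \ref{thm:hall2} to this $A$ gives $|A| \leq \bigl| \bigcup_{s_i : C_i \in A} R_D(s_i) \bigr|$. Now I would argue that $\bigcup_{s_i : C_i \in A} R_D(s_i) \subseteq R_D(B)$: indeed if $C_i \in A$ then $s_i \in R_D(B)$, so $R_D(s_i) \subseteq R_D(R_D(B)) = R_D(B)$ by transitivity of $\preceq_D$. Hence $|A| \leq |R_D(B)|$, which is exactly the desired inequality.

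For the reverse direction, assume the inequality $|\{C_i : s_i \in R_D(B)\}| \leq |R_D(B)|$ holds for all $B \subseteq [n]$, and let $A \subseteq C$ be arbitrary. Take $B = \{s_i : C_i \in A\}$. Then $R_D(B) = \bigcup_{s_i : C_i \in A} R_D(s_i)$, so the right-hand side of the hypothesis for this $B$ is precisely the right-hand side of the Theorem \ref{thm:hall2} inequality for $A$. On the left, every car $C_i \in A$ satisfies $s_i \in B \subseteq R_D(B)$ (using reflexivity $s_i \preceq_D s_i$), so $A \subseteq \{C_i : s_i \in R_D(B)\}$ and therefore $|A| \leq |\{C_i : s_i \in R_D(B)\}| \leq |R_D(B)|$. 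This verifies the hypothesis of Theorem \ref{thm:hall2}, so $s$ is a parking function on $D$.

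I don't anticipate a serious obstacle here — the whole corollary is essentially a bookkeeping translation between the two index sets. The one subtlety worth stating carefully is the use of transitivity to get $R_D(R_D(B)) = R_D(B)$ (so that the reachable sets computed from the pulled-back cars don't ``escape'' $R_D(B)$), and the use of reflexivity to guarantee $B \subseteq R_D(B)$ in the reverse direction; both follow immediately from the fact that $\preceq_D$ is a quasiorder, which was established just before the statement. If anything, the mild care needed is to make sure the quantifiers match up — that ranging over all vertex subsets $B$ really does capture all car subsets $A$ via $B = \{s_i : C_i \in A\}$, and conversely — but since both maps land in the right place this is routine.
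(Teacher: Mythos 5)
Your proof is correct and follows essentially the same route as the paper: the forward direction takes $A = \{C_i : s_i \in R_D(B)\}$ and uses $\bigcup_{s_i : C_i \in A} R_D(s_i) \subseteq R_D(B)$, and the reverse direction takes $B = \{s_i : C_i \in A\}$ so that $R_D(B) = \bigcup_{C_i \in A} R_D(s_i)$, exactly as in the paper. The only difference is that you spell out the reflexivity and transitivity of $\preceq_D$ explicitly, which the paper leaves implicit.
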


\begin{proof}
Let $s$ be a parking function on $D$ and $B \subseteq [n].$ Let $A = \{C_i : s_i \in R_D(B)\}$. Because $s$ is a parking function, we know $|A| \leq |\bigcup\limits_{s_i : C_i \in A}R_D(s_i)|$, but also by the definition of $R_D(B)$, we have $\bigcup\limits_{s_i : C_i \in A}R_D(s_i) \subseteq R_D(B)$.

On the other hand, suppose for all $B \subseteq [n]$ we have $|\{C_i : s_i \in R_D(B)\}| \leq |R_D(B)|$, let $A \subseteq C$, and $B = \bigcup\limits_{C_i \in A}\{s_i\}.$ By definition, $R_D(B) = \bigcup\limits_{C_i \in A}R_D(s_i),$ and so 
\[
|A| \leq |\{C_i : s_i \in R_D(B)\}| \leq |R_D(B)| = |\bigcup\limits_{C_i \in A}R_D(s_i)|,
\]
thus $s$ is a parking function.
\end{proof}

From this set definition, it is immediately obvious that the ordering of $s$ does not matter.

\begin{cor}
Let $(D,s)$ be a parking function and $\sigma \in \mathfrak{S}_m$. Then the permuted sequence $s_\sigma = (s_{\sigma(1)},s_{\sigma(2)},\hdots, s_{\sigma(m)})$ is also a parking function on $D$.
\end{cor}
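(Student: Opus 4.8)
The plan is to derive this corollary directly from the set-theoretic characterization in Corollary~\ref{cor:hall}, since that formulation makes no reference to the order in which the entries of $s$ appear. First I would observe that permuting $s$ by $\sigma$ merely relabels the cars: if $C_1, \dots, C_m$ are the cars for $s$, indexed so that $C_i$ prefers $s_i$, then the cars for $s_\sigma$ can be taken to be $C_{\sigma(1)}, \dots, C_{\sigma(m)}$, with the $k$-th car preferring $s_{\sigma(k)}$. In particular, the \emph{multiset} of preferred spots $\{s_1, \dots, s_m\}$ is unchanged by applying $\sigma$.

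Next I would fix an arbitrary $B \subseteq [n]$ and compare the two relevant cardinalities. The right-hand side $|R_D(B)|$ does not depend on $s$ at all, so it is identical for $s$ and $s_\sigma$. For the left-hand side, the set $\{C_i : s_i \in R_D(B)\}$ and the corresponding set for $s_\sigma$ have the same size because the condition ``$s_i \in R_D(B)$'' is being checked against the same multiset of values; formally, the map $k \mapsto \sigma(k)$ is a bijection from $\{k : s_{\sigma(k)} \in R_D(B)\}$ to $\{i : s_i \in R_D(B)\}$. Hence
\[
|\{i : (s_\sigma)_i \in R_D(B)\}| = |\{i : s_i \in R_D(B)\}| \leq |R_D(B)|,
\]
where the inequality holds because $(D,s)$ is a parking function and we invoke Corollary~\ref{cor:hall}. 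Since $B$ was arbitrary, Corollary~\ref{cor:hall} applied in the other direction shows that $s_\sigma$ is a parking function on $D$.

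There is essentially no obstacle here: the content of the statement is entirely absorbed by the reformulation in Corollary~\ref{cor:hall}, and all that remains is the routine bookkeeping that a sum (or count) indexed over $[m]$ is invariant under reindexing by a permutation. The only point requiring a moment's care is to state clearly that the condition in Corollary~\ref{cor:hall} is symmetric in the entries of $s$ — i.e., it depends on $s$ only through the multiset of its values — which is exactly why the ordering is irrelevant. One could alternatively phrase the argument via Theorem~\ref{thm:hall2} by noting that a subset $A \subseteq C$ for $s$ corresponds to a subset of the same cardinality for $s_\sigma$ with the same union of reachable sets, but the reformulation through regions of the graph in Corollary~\ref{cor:hall} is the cleanest route.
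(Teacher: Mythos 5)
Your proposal is correct and is exactly the argument the paper intends: the paper presents this corollary as an immediate consequence of Corollary~\ref{cor:hall}, since the condition there depends on $s$ only through the multiset of its entries. Your write-up simply makes the routine reindexing explicit.
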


\begin{remark}
The number of distinct $R_D(B)$ is the same as the number of filters of the quasiorder $\preceq_D$, which is, in general, much less than $2^n$.
\end{remark}

In the case of a classical parking function $s$, for any $B \subseteq [n]$ with smallest element $b$, we have that $|R_{\mathcal{P}_{n}}(B)| = |R_{\mathcal{P}_{n}}(b)| = n+1-b$. Thus, for $i \in [n]$, $|\{j : s_j \geq i\}| \leq n+1-i$, as in Definition \ref{def:classical}. In addition, Corollary \ref{cor:hall} generalizes the description of parking functions given in \cite{LACKNER2016}, which we now consider as we turn our attention to rooted trees with edges oriented away from the root.

\section{Parking Functions on Source Trees}\label{sec:lackner}

Lackner and Panholzer \cite{LACKNER2016} and Butler, Graham, and Yan \cite{BUTLER2017} independently generalized the ``drivers searching for a parking space'' interpretations of parking functions to rooted trees with edges oriented towards a root. In this section, we follow the enumerative discussion from \cite{LACKNER2016} and show that several of their theorems have analogous results when edge orientations are reversed. 

If $T$ is a rooted tree with vertex set $[n]$ and edges oriented towards the root, we let $\widetilde{T}$ be the tree obtained by reversing the orientation of all the edges. In our pictures, the root will be the top-most vertex. We call these \emph{sink} and \emph{source} trees, matching whether the root is a sink or source vertex. Similarly, if $M_f$ is the digraph obtained from $f: [n] \rightarrow [n]$ by letting $V(M_f)= [n]$ with edge set $E = \{(i,f(i))\}_{i=1}^n$, we let $\widetilde{M}_f$ be the digraph obtained from $M_f$ by reversing edge orientations. That is, $E(\widetilde{M}_f) = \{(f(i),i)\}_{i=1}^n$. We will call these \emph{mapping} and \emph{inverse mapping} digraphs, respectively. We note that digraphs in each of thse familes have a single cycle on each connected component Define the sets

\[
\mathcal{T}_n = \{T: |V(T)|=n \tx{ and $T$ is a rooted sink tree}\},
\]
\[
\mathcal{M}_n = \{M: M \tx{ is the mapping digraph of some }f:[n] \rightarrow [n]\}.
\]
We similarly define $\widetilde{\mathcal{T}}_n$ and $\widetilde{\mathcal{M}}_n$ for the source trees and inverse mapping digraphs.


Figure \ref{fig:sourceex} gives an example of a source tree and an inverse mapping digraph along with an $s \in [7]^7$ that is a parking function on both. 

\begin{figure}[h]
\begin{center}
\begin{tikzpicture}
\node[circle,draw,inner sep=1pt] (a) at (-1,0) {$3$};
\node[circle,draw,inner sep=1pt] (b) at (-1.8,-0.8) {$6$};
\node[circle,draw,inner sep=1pt] (c) at (-0.2,-0.8) {$1$};
\node[circle,draw,inner sep=1pt] (d) at (-1,-1.6) {$2$};
\node[circle,draw,inner sep=1pt] (e) at (-0.2,-1.6) {$7$};
\node[circle,draw,inner sep=1pt] (f) at (0.6,-1.6) {$4$};
\node[circle,draw,inner sep=1pt] (g) at (0.6,-2.4) {$5$};
\node (h) at (2,1) {$s = (2,3,4,1,3,5,1)$};

\draw[arrows={->[scale=1.5]}] (a) -- (b);
\draw[arrows={->[scale=1.5]}] (a) -- (c);
\draw[arrows={->[scale=1.5]}] (c) -- (d);
\draw[arrows={->[scale=1.5]}] (c) -- (e);
\draw[arrows={->[scale=1.5]}] (c) -- (f);
\draw[arrows={->[scale=1.5]}] (f) -- (g);


\node[circle,draw,inner sep=1pt] (j) at (5,0) {$3$};
\node[circle,draw,inner sep=1pt] (k) at (4.2,-0.8) {$6$};
\node[circle,draw,inner sep=1pt] (l) at (6.4,-0.8) {$1$};
\node[circle,draw,inner sep=1pt] (m) at (5.6,-1.6) {$2$};
\node[circle,draw,inner sep=1pt] (n) at (6.4,-1.6) {$7$};
\node[circle,draw,inner sep=1pt] (o) at (7.2,-1.6) {$4$};
\node[circle,draw,inner sep=1pt] (p) at (4.2,-1.6) {$5$};

\draw[arrows={->[scale=1.5]}] (j) -- (k);
\draw[arrows={->[scale=1.5]}] (j) to [out=330,in=45,loop] (j);
\draw[arrows={->[scale=1.5]}] (l) -- (o);
\draw[arrows={->[scale=1.5]}] (l) -- (m);
\draw[arrows={->[scale=1.5]}] (l) -- (n);
\draw[arrows={->[scale=1.5]}] (o) to [out=45,in=30] (l);
\draw[arrows={->[scale=1.5]}] (p) to [out=330,in=45,loop] (p);

\end{tikzpicture}
\caption{A source tree and inverse mapping digraph with a common parking function.}
\label{fig:sourceex}
\end{center}
\end{figure}
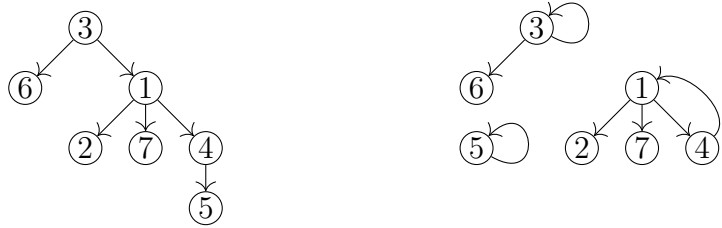

For source trees, because the indegree of a node is at most 1, for two distinct vertices, $u$ and $v$, the sets $R_{\widetilde{T}}(u)$ and $R_{\widetilde{T}}(v)$ are either disjoint or one contains the other. Thus, the $2^n$ inequalities in Corollary \ref{cor:hall} reduce to only $n$ independent inequalities. 

We briefly introduce some notation out of convenience. We call $u$ the \emph{parent} of $v$ if the two are adjacent and $u$ lies on the unique path between the root and $v$. Here, we do not consider edge orientation. Further, for a vertex $u$ of $\widetilde{T}$, let $\widetilde{T}_u$ be the subtree induced by the set $R_{\widetilde{T}}(u)$. Thus, Corollary \ref{cor:hall} can be restated as:

\begin{cor}\label{cor:shortcut}
Let $\widetilde{T}$ be a source tree and $s \in [n]^m$. Then $s$ is an $(n,m)$-parking function on $\widetilde{T}$ if and only if for all $u \in [n]$ we have 
\[
|\{i : s_i \in \widetilde{T}_u \}| \leq |\widetilde{T}_u|.
\]
\end{cor}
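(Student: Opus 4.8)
The plan is to derive both implications from Corollary~\ref{cor:hall}, exploiting the fact that in a source tree the reachable set $R_{\widetilde{T}}(u)$ is exactly the vertex set of $\widetilde{T}_u$ (the descendants of $u$, including $u$ itself), so that $|R_{\widetilde{T}}(u)| = |\widetilde{T}_u|$. The forward direction is then immediate: if $s$ is a parking function on $\widetilde{T}$, applying Corollary~\ref{cor:hall} with the singleton $B = \{u\}$ yields $|\{i : s_i \in R_{\widetilde{T}}(\{u\})\}| \le |R_{\widetilde{T}}(\{u\})|$, which is precisely $|\{i : s_i \in \widetilde{T}_u\}| \le |\widetilde{T}_u|$.

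For the converse, the key point is the observation recorded just before the statement: because each vertex of $\widetilde{T}$ has indegree at most $1$ (equivalently, a unique path to the root), the family $\{R_{\widetilde{T}}(u)\}_{u \in [n]}$ is laminar, i.e.\ any two of its members are either disjoint or nested. I would justify this in one line: if $w$ is reachable from both $u$ and $v$, then $u$ and $v$ both lie on the path from the root to $w$, so one of them is reachable from the other, whence one of $R_{\widetilde{T}}(u)$, $R_{\widetilde{T}}(v)$ contains the other. Given an arbitrary $B \subseteq [n]$, I would then select among $\{R_{\widetilde{T}}(u) : u \in B\}$ the maximal members, say $R_{\widetilde{T}}(u_1), \dots, R_{\widetilde{T}}(u_k)$, realized by those $u_j \in B$ having no proper ancestor in $B$; by laminarity $R_{\widetilde{T}}(B) = \bigsqcup_{j=1}^{k} R_{\widetilde{T}}(u_j)$ is a disjoint union.

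With this decomposition in hand the conclusion is a short computation: since the sets $\widetilde{T}_{u_j}$ partition $R_{\widetilde{T}}(B)$, the drivers preferring a spot in $R_{\widetilde{T}}(B)$ split accordingly, and summing the hypothesized inequalities over $j$ gives
\[
|\{i : s_i \in R_{\widetilde{T}}(B)\}| = \sum_{j=1}^{k} |\{i : s_i \in \widetilde{T}_{u_j}\}| \le \sum_{j=1}^{k} |\widetilde{T}_{u_j}| = |R_{\widetilde{T}}(B)|.
\]
As $B$ was arbitrary, Corollary~\ref{cor:hall} shows $s$ is a parking function on $\widetilde{T}$. The only step requiring genuine care --- and hence the main obstacle --- is the laminarity claim together with the resulting disjoint decomposition of $R_{\widetilde{T}}(B)$; everything else is bookkeeping. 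In particular I would verify that passing to the maximal members of $\{R_{\widetilde{T}}(u) : u \in B\}$ loses nothing, which holds because any non-maximal $R_{\widetilde{T}}(u)$ is contained in some maximal one and contributes nothing new to the union.
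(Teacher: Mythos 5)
Your proof is correct and follows essentially the same route as the paper, which states Corollary~\ref{cor:shortcut} without a formal proof, relying on exactly the laminarity observation you formalize (that in a source tree the reachable sets $R_{\widetilde{T}}(u)$ are pairwise disjoint or nested, so the $2^n$ inequalities of Corollary~\ref{cor:hall} reduce to the $n$ singleton ones). Your write-up simply fills in the details of the disjoint decomposition of $R_{\widetilde{T}}(B)$ into maximal members, which is the intended argument.
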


Additionally, for digraph $D$ with vertex set $[n]$, let 
\[
P(D,m) = |\{(D,s) : s\in [n]^m \tx{ is a parking function on } D\}|.
\] 
We first study the extremal values of $P(\widetilde{T},m)$.

\begin{prop}
Let $\widetilde{T}$ be a source tree, $u$ a non-root vertex, $v$ the parent of $u$, and $w$ such that $v \preceq_{\widetilde{T}} w$ and $w \notin \widetilde{T}_u$. Let $\widetilde{T}'$ be the tree obtained by removing the edge $(v,u)$ and adding the edge $(w,u)$. Then $P(\widetilde{T},m) \leq P(\widetilde{T}',m)$.
\end{prop}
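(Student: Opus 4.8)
The plan is to prove the stronger statement that \emph{every} parking function on $\widetilde{T}$ is also a parking function on $\widetilde{T}'$. Since for a fixed digraph $D$ the quantity $P(D,m)$ merely counts the sequences $s \in [n]^m$ that park on $D$, such an inclusion of parking-function sets immediately gives $P(\widetilde{T},m) \leq P(\widetilde{T}',m)$ via the identity map $s \mapsto s$. I would run the whole argument through the subtree criterion of Corollary \ref{cor:shortcut}, so the task reduces to tracking exactly which subtrees $\widetilde{T}_x$ are altered when the edge $(v,u)$ is replaced by $(w,u)$.

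First I would fix notation for the relocation. Since $v \preceq_{\widetilde{T}} w$, write the directed path as $v = p_0 \to p_1 \to \cdots \to p_k = w$ (the trivial case $w=v$ gives $\widetilde{T}'=\widetilde{T}$); the hypothesis $w \notin \widetilde{T}_u$ forces $p_1 \neq u$, so none of $p_1,\dots,p_k$ lie in $\widetilde{T}_u$ and the rerooting indeed yields a tree. I would then classify vertices $x$ according to whether $u$ is among their descendants. The key bookkeeping claim is: $\widetilde{T}'_x = \widetilde{T}_x$ for every $x \notin \{p_1,\dots,p_k\}$, whereas $\widetilde{T}'_{p_j} = \widetilde{T}_{p_j} \sqcup \widetilde{T}_u$ for each $j \in \{1,\dots,k\}$. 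The point is that detaching $\widetilde{T}_u$ from $v$ and reattaching it to $w$ leaves the descendant set of $u$ itself, of $v$, of every ancestor of $v$, and of every unrelated vertex unchanged as a set (the block $\widetilde{T}_u$ is merely relocated within $\widetilde{T}_v$); only the intermediate vertices $p_1,\dots,p_k$, which formerly did not see $\widetilde{T}_u$ but now do, acquire exactly the vertices of $\widetilde{T}_u$.

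With this structural claim in hand the verification is short. Let $s$ be a parking function on $\widetilde{T}$. For $x \notin \{p_1,\dots,p_k\}$ the inequality of Corollary \ref{cor:shortcut} reads identically on $\widetilde{T}'$ and on $\widetilde{T}$, so it carries over. For $x = p_j$ I would use $\widetilde{T}'_{p_j} = \widetilde{T}_{p_j} \sqcup \widetilde{T}_u$ to split both sides:
\[
|\{i : s_i \in \widetilde{T}'_{p_j}\}| = |\{i : s_i \in \widetilde{T}_{p_j}\}| + |\{i : s_i \in \widetilde{T}_u\}| \leq |\widetilde{T}_{p_j}| + |\widetilde{T}_u| = |\widetilde{T}'_{p_j}|,
\]
where the inequality simply adds the $\widetilde{T}_{p_j}$ and $\widetilde{T}_u$ constraints, both valid because $s$ parks on $\widetilde{T}$. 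Hence $s$ meets every inequality of Corollary \ref{cor:shortcut} on $\widetilde{T}'$ and parks there, establishing the desired inclusion.

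The only genuine obstacle I anticipate is proving the structural claim cleanly, i.e.\ confirming that the ancestors of $v$ retain $\widetilde{T}_u$ among their descendants (merely moved), so that precisely the vertices $p_1,\dots,p_k$ see their subtrees grow, and that they grow by exactly $\widetilde{T}_u$. Once the path $p_0,\dots,p_k$ is in place this is a routine case check, and the remainder of the proof is the two-line additive estimate displayed above.
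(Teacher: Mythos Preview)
Your proposal is correct and follows essentially the same argument as the paper: both show that every parking function on $\widetilde{T}$ remains one on $\widetilde{T}'$ by invoking Corollary~\ref{cor:shortcut}, observing that the only subtrees that change are those rooted at the internal vertices of the $v\to w$ path (your $p_1,\dots,p_k$, the paper's $y'$ with $v'\prec_{\widetilde{T}'} y'\preceq_{\widetilde{T}'} w'$), and handling those by the additive estimate $|\widetilde{T}_{p_j}|+|\widetilde{T}_u|$. Your write-up is in fact slightly more explicit about the path and the disjoint-union structure than the paper's, but the content is the same.
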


\begin{proof}
To clarify which tree we are considering, we denote by $x'$ the vertex in $\widetilde{T}'$ with label $x$. Let $(\widetilde{T},s)$ be an $(n,m)$-parking function. By Corollary \ref{cor:hall}, we must check $|\{i : s_i \in \widetilde{T}'_{x'} \}| \leq |\widetilde{T}'_{x'}|$ for all $x \in V(\widetilde{T}')$. By the construction of $\widetilde{T}'$, $|\{i : s_i \in \widetilde{T}'_{y'} \}| = |\{i : s_i \in \widetilde{T}_y \}|$ and $|\widetilde{T}'_{y'}| = |\widetilde{T}_y|$ for all $y'$ not satisfying $v' \prec_{\widetilde{T}'} y' \preceq_{\widetilde{T}'} w'$.

Therefore, let $y'$ be a vertex satisfying $v' \prec_{\widetilde{T}'} y' \preceq_{\widetilde{T}'} w'$. We thus have:

\begin{align*}
|\{i : s_i \in \widetilde{T}'_{y'} \}| &= |\{i : s_i \in \widetilde{T}_y \}| + |\{i : s_i \in \widetilde{T}_u \}| \\
&\leq |\widetilde{T}_y| + |\widetilde{T}_u| \\
&= |\widetilde{T}'_{y'}|.
\end{align*}
\end{proof}

As a result, we obtain an upper and lower bound on $P(\widetilde{T},m)$.

\begin{cor}\label{cor:extreme}
The number of $(n,m)$-parking functions is maximized when $\widetilde{T}$ is a path and minimized when $\widetilde{T}$ is a star, meaning

\[
\sum\limits_{i=0}^{m}{m \choose i}(n-1)^{\underline{m-i}} \leq P(\widetilde{T},m) \leq (n-m+1)(n+1)^{m-1}.
\]
\end{cor}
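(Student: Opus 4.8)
The plan is to derive both inequalities from the preceding proposition, which says that ``pushing an edge of a source tree further from the root'' never decreases the number of $(n,m)$-parking functions, and then to evaluate $P(\widetilde{T},m)$ directly in the two extreme cases.

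\textbf{Upper bound (the path).} First I would show that every source tree $\widetilde{T}$ on $[n]$ can be transformed into a directed path by a finite sequence of moves of the type in the preceding proposition. If $\widetilde{T}$ is not a path, pick a vertex $v$ of out-degree at least two with children $u,u'$, and let $w$ be any leaf of $\widetilde{T}$ in the subtree rooted at $u'$; then $v \preceq_{\widetilde{T}} w$ and $w \notin \widetilde{T}_u$, so the proposition applies and produces a source tree $\widetilde{T}'$ with the same root and $P(\widetilde{T},m) \le P(\widetilde{T}',m)$. To see the process terminates I would track $\Phi(\widetilde{T}) = \sum_{x \in [n]} \binom{d^+(x)}{2}$, where $d^+$ is out-degree: the move lowers $d^+(v)$ by one and raises $d^+(w)$ from $0$ to $1$, so $\Phi$ strictly decreases, and $\Phi=0$ forces every out-degree to be at most one, i.e. a path. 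Hence $P(\widetilde{T},m) \le P(\widetilde{P}_n,m)$ for the path $\widetilde{P}_n$. By Corollary~\ref{cor:shortcut} the subtrees below the vertices of a path have sizes $1,2,\dots,n$ in some order, so the parking condition on $\widetilde{P}_n$ is exactly the classical one of Definition~\ref{def:classical} up to relabelling the spots (which preserves parking functions), and the classical enumeration recalled in the introduction gives $P(\widetilde{P}_n,m) = (n-m+1)(n+1)^{m-1}$.

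\textbf{Lower bound (the star).} For the other inequality I would show that every source tree $\widetilde{T}$ can be \emph{reached} from the star by such moves, so that $P(\mathrm{star}_n,m) \le P(\widetilde{T},m)$ by iterating the proposition. It is cleanest to run this in reverse: while $\widetilde{T}$ is not a star, choose a non-root vertex $u$ whose parent $w$ is not the root $r$, and detach the whole subtree $\widetilde{T}_u$ from $w$ and reattach it to $r$ (delete $(w,u)$, add $(r,u)$). Since $r \preceq w$ always holds and $w$ is an ancestor of $u$, hence not among the relocated vertices, this is precisely the reverse of a move of the preceding proposition, so $P$ does not increase; and it strictly decreases $\sum_{x \ne r}\operatorname{depth}(x)$, so it terminates, necessarily at the star. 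Reading the resulting chain from the star up to $\widetilde{T}$ yields $P(\mathrm{star}_n,m) \le P(\widetilde{T},m)$. Finally, for the star Corollary~\ref{cor:shortcut} reduces to the single family of constraints ``each of the $n-1$ leaves is preferred by at most one car'' (the root's inequality is $m \le n$, automatic), and counting the admissible $s \in [n]^m$ by the number $i$ of cars preferring the root gives $P(\mathrm{star}_n,m) = \sum_{i=0}^{m}\binom{m}{i}(n-1)^{\underline{m-i}}$.

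\textbf{Main obstacle.} The two explicit counts are short; the substance is the ``straightening'' argument, and the point needing the most care is the lower bound: one must check that relocating the entire subtree $\widetilde{T}_u$ to the root really is a single application of the preceding proposition read backwards (the subtree below the edge $(w,u)$ is exactly $\widetilde{T}_u$, and the hypotheses $r \preceq w$ and $w \notin \widetilde{T}_u$ are met), and one must exhibit monovariants ($\Phi$ for the path, $\sum_{x\ne r}\operatorname{depth}(x)$ for the star) certifying that both procedures terminate at the claimed extremal shapes.
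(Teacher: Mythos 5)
Your proof is correct and takes essentially the same route as the paper: the paper's own proof consists only of the two enumerations (the path giving the classical count $(n-m+1)(n+1)^{m-1}$, the star giving $\sum_{i}\binom{m}{i}\binom{n-1}{m-i}(m-i)! = \sum_i \binom{m}{i}(n-1)^{\underline{m-i}}$), with the reduction to the extremal shapes left implicit in the phrase ``as a result'' following the proposition. Your straightening arguments --- checking that the proposition's move applies in each direction and certifying termination with the monovariants $\Phi$ and $\sum_{x\ne r}\mathrm{depth}(x)$ --- correctly supply that omitted step, and your two counts agree with the paper's.
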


\begin{proof}
For a path, the parking functions, up to vertex labeling, are classical parking functions, and thus number $(n-m+1)(n+1)^{m-1}$.

On a star, for $0 \leq i \leq m$, when $i$ drivers prefer the root, there are ${n-1 \choose m-i}$ ways to choose the preferred non-root vertices, ${m \choose i}$ ways to place the drivers preferring the root in $s$, and $(m-i)!$ ways to order the drivers preferring non-roots in $s$.
\end{proof}

In the case of sink trees, the maximum and minimum number of parking functions also occur on paths and stars, respectively. From \cite{LACKNER2016}, we have
\[
n^{\underline{m}} + {m \choose 2}(n-1)^{\underline{m-1}} \leq P(T,m) \leq (n-m+1)(n+1)^{m-1}.
\]
If $T$ is a star, it is not immediately clear for an arbitrary choice of $(n,m)$ which of $P(T,m)$ and $P(\widetilde{T},m)$ is larger. However, we can say after inspecting the formulae that if $m$ is 0 or $1$ then the two are equal and if $n \geq 3$ then $P(T,3)$ is larger. What happens when $3 < m < n$ remains open, but we can determine which is larger when $m=n$ for all $T$.

\begin{theorem}\label{thm:treenumber}
Let $T \in \mathcal{T}_n$. Then
\[
P(T,n) \leq P(\widetilde{T},n)
\]
with equality if and only if $T$ is a path.
\end{theorem}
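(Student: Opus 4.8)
The plan is induction on $n=|V(T)|$, after a reformulation. Corollaries \ref{cor:hall} and \ref{cor:shortcut}, together with $|g^{-1}(V(T))|=n$, give
\[
P(T,n)=\#\{g\colon[n]\to V(T)\ :\ |g^{-1}(T_v)|\ge|T_v|\ \text{ for all }v\},\qquad
P(\widetilde T,n)=\#\{g\ :\ |g^{-1}(T_v)|\le|T_v|\ \text{ for all }v\},
\]
where $T_v$ is the vertex set of the subtree rooted at $v$ (the same set in $T$ and $\widetilde T$) and $g(i)$ records the preferred spot of car $i$. For the first identity, a nonempty up-set of $T$ is the complement of a down-set $\bigsqcup_{v\in A}T_v$ with $A$ an antichain, so the condition of Corollary \ref{cor:hall} becomes $\sum_{v\in A}\bigl(|g^{-1}(T_v)|-|T_v|\bigr)\ge0$ for all antichains $A$, which, by taking singletons, is equivalent to $|g^{-1}(T_v)|\ge|T_v|$ for all $v$. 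So I must show that there are at most as many ways to place $n$ cars making every subtree over-subscribed as there are ways making every subtree under-subscribed.

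Now split at the root $r$, with subtrees $T^{(1)},\dots,T^{(k)}$ of sizes $t_1,\dots,t_k$ and $n=1+\sum_j t_j$; let $a_j$ be the number of cars preferring a vertex of $T^{(j)}$, so $a_0:=n-\sum_j a_j$ is the number preferring $r$. In the under-subscribed count one needs $a_j\le t_j$ (hence $a_0\ge1$) together with an under-subscribing placement of $a_j$ cars inside each $T^{(j)}$; in the over-subscribed count one needs $a_j\ge t_j$, and since $\sum_j t_j=n-1$ this forces either all $a_j=t_j$ and $a_0=1$, or one $a_{j_0}=t_{j_0}+1$ and $a_0=0$, together with an over-subscribing placement inside each $T^{(j)}$. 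Writing $\alpha(S,a)$ for the number of over-subscribing and $\beta(S,a)$ for the number of under-subscribing placements of $a$ cars on a rooted tree $S$, these expansions present $P(T,n)$ as $k+1$ explicit terms and $P(\widetilde T,n)$ as a larger sum over all $(a_0,\dots,a_k)$ with $a_j\le t_j$. I would match the all-full term of $P(T,n)$ with the all-full term of $P(\widetilde T,n)$ (the needed inequality being exactly the induction hypothesis $\alpha(T^{(j)},t_j)=P(T^{(j)},t_j)\le P(\widetilde{T^{(j)}},t_j)=\beta(T^{(j)},t_j)$), and match the ``$T^{(j_0)}$ over by one'' term with all terms of $P(\widetilde T,n)$ in which only the $j_0$-th subtree fails to be full. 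After cancelling common multinomial factors and applying the induction hypothesis to the remaining subtrees, this reduces, for each subtree $S=T^{(j_0)}$ with $p=|S|$, to
\[
(\star)\qquad \alpha(S,p+1)\ \le\ \sum_{e=0}^{p-1}\binom{p+1}{e}\,\beta(S,e).
\]
The terms of $P(\widetilde T,n)$ left unmatched (those with two or more subtrees not full) are nonnegative, and are strictly positive exactly when $r$ has at least two children; since $\beta(S,a)\ge1$ for $0\le a\le|S|$, the induction hypothesis is strict unless each $T^{(j)}$ is a path, and $(\star)$ holds with equality when $S$ is a path, one obtains $P(T,n)=P(\widetilde T,n)$ precisely when $T$ is a path.

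It remains to prove $(\star)$, which I would do via the parking process. If $f$ places $p+1$ cars so that every subtree $S_v$ of $S$ is over-subscribed, then the (deterministic) parking process on the sink tree $S$ fills all $p$ spots and strands exactly one car: were a spot $w$ left empty, every car preferring a vertex of $S_w$ would park strictly below $w$ inside $S_w$, but $S_w$ has only $|S_w|-1$ such spots while $|f^{-1}(S_w)|\ge|S_w|$. Using the stranded car together with the resulting bijection between the other $p$ cars and the spots (which sends each car into the subtree of its spot), I would construct an injection from such $f$ into pairs $(E,g)$ with $E\subseteq[p+1]$, $|E|\le p-1$, and $g\colon E\to V(S)$ under-subscribing --- which is exactly the right-hand side of $(\star)$ rewritten via $\binom{p+1}{e}=\binom{p+1}{p+1-e}$ (choose the $p+1-e\ge2$ cars to discard, place the rest). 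The hard part, and the main obstacle of the whole argument, is to choose the two discarded cars and the values of $g$ so that $f$ is recoverable from $(E,g)$ (injectivity) while $g$ stays under-subscribing, and to verify that this injection is a bijection when $S$ is a path --- the latter supplying the equality clause used above. An alternative is to prove $(\star)$ by its own root-decomposition induction on $|S|$, provided the statement is strengthened enough to reabsorb the term $\alpha(S^{(j)},p_j+2)$ that then appears.
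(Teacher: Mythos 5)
Your reduction is correct as far as it goes, and it is a genuinely different route from the paper's. Translating Corollaries \ref{cor:hall} and \ref{cor:shortcut} into ``every subtree over-subscribed'' versus ``every subtree under-subscribed'' is valid (for a sink tree the reachable sets are exactly the up-sets, whose complements are disjoint unions of subtrees, and the antichain conditions reduce to singletons), and the root decomposition correctly shows that over-subscription forces either every subtree exactly full or exactly one subtree over by one. The paper instead argues directly: it runs the deterministic parking process on $T$, decomposes the tree into ``paths'' $P_i$ indexed by leaves according to which edges are actually used, and reverses the labels along each $P_i$ to build an explicit injection $s \mapsto \tau(s)$ into parking functions on $\widetilde{T}$, with non-surjectivity witnessed by the all-root preference sequence. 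Your approach, if completed, would have the advantage of producing a recursive formula rather than just an injection.

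However, the argument is not complete: everything has been funneled into the inequality $(\star)$, namely $\alpha(S,p+1)\le\sum_{e=0}^{p-1}\binom{p+1}{e}\beta(S,e)$, together with its equality case for paths, and neither is established. You say so yourself --- the injection from over-subscribing placements of $p+1$ cars into pairs $(E,g)$ is described only by its intended source and target, and ``the hard part \dots is to choose the two discarded cars and the values of $g$ so that $f$ is recoverable.'' That choice is the entire content of the theorem in your formulation: it is exactly the point at which a sink-tree parking outcome must be converted into a source-tree one, i.e., precisely where the paper's $\tau$ and the recoverability of the $P_i$ from $(\widetilde{T},\tau(s))$ do their work. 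Your preliminary observation that an over-subscribing $f$ fills all $p$ spots and strands exactly one car is correct and is a reasonable starting point, but the naive map sending each kept car to the spot where it parks discards the preference data and is far from injective, so substantial additional bookkeeping is needed and none is supplied. A smaller issue: the equality case of $(\star)$ for paths is also asserted without proof; this one at least can be closed independently, either by the closed form $\beta(S,e)=(p-e+1)(p+1)^{e-1}$ and an Abel-type identity, or by noting that both sides of the theorem equal $(n+1)^{n-1}$ for a path so that only the ``only if'' direction needs the term-by-term analysis. As it stands, the proposal is a correct reduction of the theorem to an unproved combinatorial inequality, not a proof.
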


\begin{proof}
Let $(T,s)$ be a parking function on sink tree $T$. We give a process to determine an involution $\tau \in \mathfrak{S}_n$ such that $\tau(s) = (\tau(s_1),\tau(s_2),\hdots,\tau(s_n))$ is a parking function on the source tree $\widetilde{T}$. Park cars on $T$ following the parking procedure, highlighting an edge if it is used by a driver after failing to park at her preferred spot. Since $T$ is a sink tree, each vertex has outdegree at most 1, so parking is deterministic. We define $\tau$ by individually considering the components connected by highlighted edges. So without loss of generality, we may assume that every edge in $T$ is highlighted.

We define a collection of length $\geq 2$ ``paths'' in $T$, one for each leaf, whose vertices form a partition of the vertices of $T$. The purpose of these ``paths'' is to identify a section of the tree where we can ``flip'' the edge orientations and driver preferences and still guarantee each driver a spot to park. 

Let $\{v_i\}$, for $1 \leq i \leq k$ be the leaves of $T$ indexed such that $v_i < v_{i+1}$. We recursively define the sets $P_i$: the set $P_i$ is the smallest set of vertices of the path between $v_i$ and the root such that no vertices from $P_j, j<i,$ are in $P_i$, there are $|P_i|$ drivers preferring $P_i$, and all vertices of $P_i$ are connected to $v_i$ through vertices in $\{P_j\}_{j=1}^i$. For example, on the left tree of Figure \ref{fig:treeinversion}, we have sets $P_1 = \{1,2,3,5\}$ and $P_2 = \{4,6\}$. Two drivers prefer the leaf $\{1\}$, three drivers prefer the vertices $\{1,2\}$, four drivers prefer $\{1,2,3\}$ and $\{1,2,3,5\}$, so the latter is $P_1$. When determining $P_2$, we skip over vertices in previously-chosen paths, in this case the vertex labeled 5. 

\begin{figure}[h]
\begin{center}
\begin{tikzpicture}
\node[circle,draw,inner sep=1pt] (a) at (0,0) {$6$};
\node[circle,draw,inner sep=1pt] (b) at (0,-0.8) {$5$};
\node[circle,draw,inner sep=1pt] (c) at (0.8,-1.6) {$4$};
\node[circle,draw,inner sep=1pt] (d) at (-0.8,-1.6) {$3$};
\node[circle,draw,inner sep=1pt] (e) at (-0.8,-2.4) {$2$};
\node[circle,draw,inner sep=1pt] (f) at (-0.8,-3.2) {$1$};
\node (g) at (0,0.5) {$p = (1,4,4,2,1,3)$};

\draw[arrows={->[scale=1.5]}] (b) -- (a);
\draw[arrows={->[scale=1.5]}] (c) -- (b);
\draw[arrows={->[scale=1.5]}] (d) -- (b);
\draw[arrows={->[scale=1.5]}] (e) -- (d);
\draw[arrows={->[scale=1.5]}] (f) -- (e);

\node (h) at (2.5,-1.5) {$\longrightarrow$};

\node[circle,draw,inner sep=1pt] (h) at (5,0) {$6$};
\node[circle,draw,inner sep=1pt] (i) at (5,-0.8) {$5$};
\node[circle,draw,inner sep=1pt] (j) at (5.8,-1.6) {$4$};
\node[circle,draw,inner sep=1pt] (k) at (4.2,-1.6) {$3$};
\node[circle,draw,inner sep=1pt] (l) at (4.2,-2.4) {$2$};
\node[circle,draw,inner sep=1pt] (m) at (4.2,-3.2) {$1$};
\node (n) at (5,0.5) {$\tau(p) = (5,6,6,3,5,2)$};

\draw[arrows={->[scale=1.5]}] (h) -- (i);
\draw[arrows={->[scale=1.5]}] (i) -- (j);
\draw[arrows={->[scale=1.5]}] (i) -- (k);
\draw[arrows={->[scale=1.5]}] (k) -- (l);
\draw[arrows={->[scale=1.5]}] (l) -- (m);

\end{tikzpicture}
\caption{Constructing a parking function on $\widetilde{T}$ from one on $T$. $\tau = (15)(23)(46)$}
\label{fig:treeinversion}
\end{center}
\end{figure}
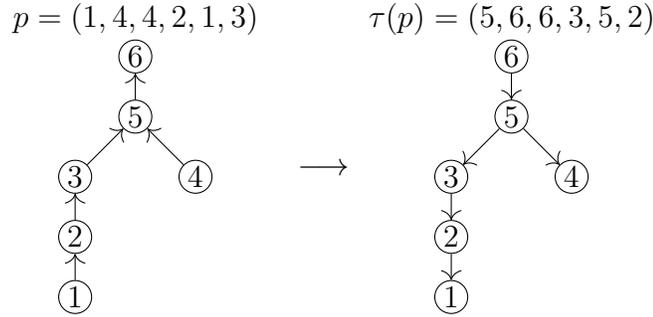

The collection $\{P_i\}_{i=1}^k$ must partition the vertices of $T$. Suppose it does not and let $v \notin P_i$ for any $i$ be such that $v$ is the only vertex in $T_v$, the subtree rooted at $v$, with this property. The vertex $v$ is not a leaf of $T$, as all leaves are in the sets $\{P_i\}_{i=1}^k$ by construction, and thus is the terminus of at least one edge, $(u,v)$. Since none of the ``paths'' corresponding to leaves of $T_v$ contain $v$ and because all cars can park, all cars preferring spots $w \preceq_T u$ can park without occupying $v$. This means the edge $(u,v)$ is not used by any driver after failing to park in her preferred spot, which contradicts our assumption that this was true of all edges. Therefore, such a $v$ can not exist.

For each $i$, let $n_i = |P_i|$ and label the elements of $P_i$ by $w_{i,j}$ such that $w_{i,1} = v_i \preceq_T w_{i,2} \preceq_T \hdots \preceq_T w_{i,n_{i}}$. Finally, we define $\tau(w_{i,j}) = w_{i,n_{i}+1-j}$. This reverses the driver preference along the ``path'' so that when the edge orientation is flipped for $\widetilde{T}$, the drivers may park as they did on $T$.

We can recover the $P_i$ from $(\widetilde{T},\tau(s))$ using the exact same method. Thus, we can invert the process. On the right tree of Figure \ref{fig:treeinversion}, zero drivers prefer $\{1\}$, one driver prefers $\{1,2\}$, two drivers prefer $\{1,2,3\}$, and four drivers $\{1,2,3,5\}$, so this is $P_1$. For $P_2$, no drivers prefer $\{4\}$, we skip over 5 as it is already in $P_1$, and two drivers prefer $\{4,6\}$. 

If $T$ is not a directed path, then this process is not surjective because the parking function in which all $n$ drivers prefer the root of $\widetilde{T}$ is not obtainable in this manner as at least one driver prefers each leaf vertex.
\end{proof}

Define the following for $n \geq 1$:

\[
F_{n,m} =  \sum\limits_{T \in \mathcal{T}_{n}}P(T,m), \tx{ and } M_{n,m} = \sum\limits_{M \in \mathcal{M}_{n}}P(M,m).
\]
Similarly, define $\widetilde{F}_{n,m}$ and $\widetilde{M}_{n,m}$ for source trees and inverse mappings. Summing over all $T \in \mathcal{T}_n$ gives us

\begin{cor}
For $n \geq 1$,
\[
F_{n,n} \leq \widetilde{F}_{n,n},
\]
with equality only when $n \in \{1,2\}$.
\end{cor}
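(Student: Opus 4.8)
The plan is to sum the pointwise inequality $P(T,n)\le P(\widetilde T,n)$ of Theorem~\ref{thm:treenumber} over all rooted sink trees $T\in\mathcal{T}_n$. Since reversing all edge orientations is a bijection $T\mapsto\widetilde T$ from $\mathcal{T}_n$ onto $\widetilde{\mathcal{T}}_n$, we have $\sum_{T\in\mathcal{T}_n}P(\widetilde T,n)=\sum_{\widetilde T\in\widetilde{\mathcal{T}}_n}P(\widetilde T,n)=\widetilde F_{n,n}$ by the definition of $\widetilde F_{n,m}$, and of course $\sum_{T\in\mathcal{T}_n}P(T,n)=F_{n,n}$. Adding the inequalities term by term therefore yields $F_{n,n}\le\widetilde F_{n,n}$ immediately.

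For the equality claim, Theorem~\ref{thm:treenumber} tells us that $P(T,n)=P(\widetilde T,n)$ exactly when $T$ is a directed path, and $P(T,n)<P(\widetilde T,n)$ strictly otherwise. Hence $F_{n,n}=\widetilde F_{n,n}$ if and only if \emph{every} tree in $\mathcal{T}_n$ is a directed path. First I would note that for $n=1$ the only rooted tree is a single vertex (vacuously a path) and for $n=2$ the only rooted tree is the single edge, again a path, so equality holds in these cases. For $n\ge 3$, the rooted tree on $[n]$ consisting of a path $1\to 2\to\cdots\to(n-1)$ together with an extra leaf $n$ attached to vertex $1$ (so that vertex $1$ has in-degree $2$ and the root has a pendant sibling structure) is a sink tree that is not a directed path; its existence shows $F_{n,n}<\widetilde F_{n,n}$ strictly. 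Thus equality holds precisely for $n\in\{1,2\}$.

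There is essentially no obstacle here: the entire content is in Theorem~\ref{thm:treenumber}, and the corollary is a routine summation plus a bijection on the index set. The only mild care needed is to confirm that $T\mapsto\widetilde T$ is genuinely a bijection $\mathcal{T}_n\to\widetilde{\mathcal{T}}_n$ (it is, since reversing orientations twice is the identity and a source tree reverses to a sink tree) and to exhibit a single non-path sink tree for each $n\ge 3$ to force the strict inequality, which I would do with the explicit example above.
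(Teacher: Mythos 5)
Your argument is exactly the paper's: the corollary follows by summing the pointwise inequality of Theorem \ref{thm:treenumber} over $\mathcal{T}_n$, using that $T\mapsto\widetilde{T}$ bijects $\mathcal{T}_n$ onto $\widetilde{\mathcal{T}}_n$, and noting that for $n\ge 3$ not every sink tree is a directed path. One slip in your write-up: the witness you exhibit for $n\ge 3$ is actually a directed path after relabeling --- attaching the leaf $n$ to the endpoint $1$ of $1\to 2\to\cdots\to(n-1)$ yields $n\to 1\to 2\to\cdots\to(n-1)$, and vertex $1$ then has in-degree $1$, not $2$. Replace it with any genuinely non-path sink tree, e.g.\ the star on $[n]$ with all of $1,\dots,n-1$ pointing to the root $n$, and the proof is complete.
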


As we previously mentioned, when $m<n$, the result of Theorem \ref{thm:treenumber} does not necessarily hold. For the tree in Figure \ref{fig:treecounter}, $P(T,2) = 15$, as any sequence except $(4,4)$ parks. However, $P(\widetilde{T},2)=14$ as neither $(1,1)$ nor $(2,2)$ are parking functions.

\begin{figure}[h]
\begin{center}
\begin{tikzpicture}
\node[circle,draw,inner sep=1pt] (a) at (0,0) {$4$};
\node[circle,draw,inner sep=1pt] (b) at (0,-1) {$3$};
\node[circle,draw,inner sep=1pt] (c) at (-0.5,-2) {$1$};
\node[circle,draw,inner sep=1pt] (d) at (0.5,-2) {$2$};

\draw (a) -- (b);
\draw (b) -- (c);
\draw (b) -- (d);
\end{tikzpicture}
\caption{A tree for which $P(T,2) > P(\widetilde{T},2)$.}
\label{fig:treecounter}
\end{center}
\end{figure}
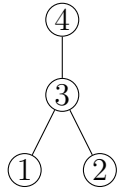

\begin{remark}
The extremal values for $P(\widetilde{M}_f,m)$ are less interesting than the corresponding values on trees. When $\widetilde{M}_f$ is a cycle, $P(\widetilde{M}_f,m)$ is maximal (as all $n^m$ sequences can park) and is minimal when $f=id$. Thus
\[
m! \leq P(\widetilde{M}_f,m) \leq n^m.
\]
In fact, the same is true for $P(M_F,m)$.
\end{remark}

\section{Comparing $P(\widetilde{T},m)$ and $P(\widetilde{M}_f,m)$.}\label{sec:comparing}

Lackner and Panholzer \cite{LACKNER2016} proved $n \cdot  F_{n,m} = M_{n,m}$. In fact, this relationship still holds when the edge orientations are reversed. We prove the claim when $m=n$, then we will show the more general case. While the overall idea of the proofs are similar to their counterparts in \cite{LACKNER2016}, there are some technical differences in dealing with the source trees. Because, on sink trees, the spot in which each driver parked was well-defined, the authors of \cite{LACKNER2016} were able to identify edges in the digraphs that were not used during parking and thus could be freely manipulated. In our case, as the drivers no longer necessarily have a unique walk along which to search for a parking spot, we must instead identify edges that are not necessary for \emph{some} successful parking. This is simple enough on trees using the characterization of Corollary \ref{cor:hall}, but it is not immediately clear for inverse mapping digraphs. So, we first prove that at least one cycle edge on each component of an inverse mapping digraph is not needed for parking.

\begin{lemma}\label{lem:cycles}
Let $(\widetilde{M}_f,s)$ be a parking function. Then there exists at least one edge in each cycle of $\widetilde{M}_f$ that can be deleted such that all drivers can still park.
\end{lemma}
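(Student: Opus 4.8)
The plan is to use the reachability characterization of Corollary \ref{cor:hall} together with the fact that each connected component of $\widetilde{M}_f$ has exactly one cycle. Fix a cycle $Z = (z_0 \to z_1 \to \cdots \to z_{k-1} \to z_0)$ in $\widetilde{M}_f$ sitting in a component $K$. In $\widetilde{M}_f$, every vertex of $K$ is reachable from every $z_i$ (since the cycle is the unique ``sink region'' of the component, and in the reversed mapping digraph arrows flow out of the cycle along the trees hanging off it); in particular $R_{\widetilde{M}_f}(z_i) = K$ for every $i$, and more generally $R_{\widetilde{M}_f}(B) = R_{\widetilde{M}_f}(B \cup K)$ whenever $B$ meets $K$. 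So deleting one edge $(z_{i-1}, z_i)$ of the cycle only affects reachable sets of those vertices $v$ whose only route into the rest of $K$ passed through that edge — namely the vertices in the ``branch'' $T_i$ that feeds into $z_i$ from the cycle side, i.e. the set $R_{\widetilde{M}_f}(z_i) \setminus R_{\widetilde{M}_f'}(z_i)$ where $\widetilde{M}_f'$ is the digraph with that edge removed. After the deletion, for a vertex $v$ in that branch we have $R_{\widetilde{M}_f'}(v) = R_{\widetilde{M}_f}(v) \setminus K$ (it can no longer reach the cycle or anything downstream of it), while for all other vertices reachable sets are unchanged.

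With this in hand, I would argue by contradiction: suppose that for every $i \in \{1,\dots,k\}$, deleting $(z_{i-1},z_i)$ breaks the parking property on $\widetilde{M}_f'$. By Corollary \ref{cor:hall}, this means there is some $B_i \subseteq [n]$ with $|\{C_j : s_j \in R_{\widetilde{M}_f'}(B_i)\}| > |R_{\widetilde{M}_f'}(B_i)|$, while the same inequality fails (i.e.\ holds with $\leq$) for $R_{\widetilde{M}_f}(B_i)$. Since reachable sets only shrank, and only for vertices in branch $T_i$, the witnessing set $R_{\widetilde{M}_f'}(B_i)$ must be a union of full branches $T_j$ for $j$ in some index set, together possibly with subtrees entirely outside $K$ — in any case it is disjoint from the cycle $Z$. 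Let $S_i = R_{\widetilde{M}_f'}(B_i)$; then $S_i$ is "overloaded": strictly more than $|S_i|$ drivers prefer spots in $S_i$. The key structural point is that $S_i$ is exactly $R_{\widetilde{M}_f}(B_i)$ with the component $K$'s cycle and everything beyond it removed; equivalently $R_{\widetilde{M}_f}(B_i) = S_i \cup K$.

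Now I would combine the $k$ overloaded sets. The sets $S_1,\dots,S_k$ need not be nested, but their union $S = S_1 \cup \cdots \cup S_k$ satisfies $R_{\widetilde{M}_f}(S) = S \cup K$ (each $S_i$ reaches into $K$, nothing new is added), and I claim $S$ is still overloaded in $\widetilde{M}_f$, which will contradict that $(\widetilde{M}_f,s)$ is a parking function via Corollary \ref{cor:hall} applied to $R_{\widetilde{M}_f}(S) = S\cup K$. To see $S \cup K$ is overloaded: every driver preferring a spot in some $S_i$ already prefers a spot in $S$; the branches $T_i$ (the parts of $S_i$ inside $K$) together with the cycle cover all of $K$, and since the original was a parking function the total number of drivers preferring spots in $K$ is at most $|K|$, but within $K$ the overflow from each branch $T_i$ must be absorbed by the cycle $Z$, and there are $k$ branches competing for the $k$ cycle vertices — a careful counting of how much ``slack'' $Z$ provides, versus how much overflow the $k$ branches demand, yields $|\{C_j : s_j \in S\cup K\}| > |S \cup K|$. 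This counting step is the heart of the argument and the main obstacle: I need to show the per-branch deficits cannot all be simultaneously repaired by the single shared cycle, i.e. a pigeonhole/flow argument on the component $K$ showing that if each of the $k$ cyclic edges were individually indispensable, then the cycle is ``oversubscribed'' as a whole. I expect the cleanest way to make this rigorous is to phrase it as a flow/Hall argument on $K$ alone (treating the incoming drivers as supply and the vertices as unit-capacity sinks), where indispensability of edge $(z_{i-1},z_i)$ means the min-cut separating $T_i$ from $z_i$ is tight, and tightness of all $k$ such cuts forces a global cut of $K$ that is violated — contradicting that all drivers in $K$ park. Once that lemma-internal counting is done, the contradiction with Corollary \ref{cor:hall} is immediate.
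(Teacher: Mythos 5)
There is a genuine gap, and it comes in two parts. First, your structural analysis of what happens when a cycle edge is deleted is the wrong one for \emph{inverse} mapping digraphs. In $\widetilde{M}_f$ the edges are $(f(i),i)$, so the trees hanging off the unique cycle of a component are oriented \emph{away} from the cycle: a non-cycle vertex can never reach the cycle, and there is no ``branch $T_i$ that feeds into $z_i$.'' Consequently, deleting the cycle edge $(z_{i-1},z_i)$ does not change the reachable set of any non-cycle vertex, nor of $z_i$ itself; what it changes is $R(z_j)$ for the \emph{other} cycle vertices $z_j$, which shrinks to the arc $\{z_j,z_{j+1},\dots,z_{i-1}\}$ together with the subtrees hanging off that arc. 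In particular your claim that any witnessing overloaded set $R_{\widetilde{M}_f'}(B_i)$ ``is disjoint from the cycle $Z$'' is false --- the candidate overloaded sets after deletion are exactly these arcs-plus-subtrees, which always contain cycle vertices. The picture you describe (branches feeding into the cycle, a branch losing access to $K$ when its entry edge is cut) is the one for the un-reversed mapping digraphs $M_f$ treated in Lackner--Panholzer, not for $\widetilde{M}_f$. Second, even granting a corrected setup, the step you yourself flag as ``the heart of the argument'' --- showing that if every cycle edge is indispensable then the component is globally oversubscribed --- is not carried out; it is only asserted that ``a careful counting'' or ``a pigeonhole/flow argument'' would finish it. Since that counting is the entire content of the lemma, the proposal as written does not constitute a proof.

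For comparison, the paper avoids this counting altogether by inducting on the cycle length: setting $V_i$ and $\alpha_i$ to be the size of, and the number of drivers preferring, the subtree hanging off cycle vertex $i$ (with all cycle edges removed), it finds some $i$ with $\alpha_i\ge V_i$, contracts the edge $(i-1,i)$ to get a smaller parking function, obtains a deletable edge by induction, and then verifies by cases that the same edge is deletable upstairs. A non-inductive version of your strategy can be made to work --- the condition for edge $(z_{i-1},z_i)$ to be deletable is a family of cyclic partial-sum inequalities $\sum_{l=j}^{i-1}\alpha_l\le\sum_{l=j}^{i-1}V_l$, and a cycle-lemma-type argument using $\sum_l \alpha_l \ge \sum_l V_l - (n - m)$ produces a good $i$ --- but that requires first getting the reachable sets right and then actually doing the partial-sum bookkeeping you deferred.
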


\begin{proof}
We induct on the number of vertices in a cycle. Let $(\widetilde{M}_f,s)$ be an $(n,n)$-parking function. Without loss of generality, we may assume there is only one component of $\widetilde{M}_f$ and thus a unique cycle in the graph. If only one vertex is in the cycle, then there is an edge of the form $(u,u)$ which is useless for parking and may be deleted.

Now suppose the cycle has length $r > 1$. Furthermore, suppose without loss of generality that the vertices of the cycle are labeled by $[r]$, $f(r)=1,$ and $f(i)=i+1$ for $i \in [r-1]$. Let $M$ be the graph obtained by deleting all cycle edges. Define for $1 \leq i \leq r,$ $V_i := |R_M(i)|$ and $\alpha_i := |\{j:s_j \in R_M(i)\}|$. These are the numbers of vertices in and drivers preferring the subtree induced by the vertex $i$ along with all $i \preceq_{\widetilde{M}_f} v$ for $v$ non-cycle vertices in $\widetilde{M}_f$.

If $\alpha_i < V_i$ for all $i \in [r]$, then there are strictly fewer than $n$ drivers attempting to park, contradicting the assumption that $(\widetilde{M}_f,s)$ is an $(n,n)$-parking function. Hence, we know $\alpha_i \geq V_i$ for some $i$. Since $s$ is a parking function on $\widetilde{M}_f$, at least one driver prefers $i$ (otherwise, too many drivers prefer non-cycle vertices). We construct an $(n-1,n-1)$-parking function by contracting the edge $(i,i-1)$ to identify the vertices $i-1$ and $i$ as a single vertex (with label $i-1$) to form the digraph $M'$, deleting the first instance of $i$ from $s,$ and changing all others to $i-1$ to form the sequence $s'$. For non-cycle vertex $v$, $|R_{\widetilde{M}_f}(v)| = |R_{M'}(v)|$, as are the number of drivers preferring each set. If $v$ is instead a cycle vertex, then $n=|R_{\widetilde{M}_f}(v)| = |R_{M'}(v)|+1$, while $n$ drivers prefer $R_{\widetilde{M}_f}(v)$ and $n-1$ drivers prefer $R_{M'}(v)$. Thus, $(M',s')$ is a parking function with $r-1$ vertices in the cycle. By the inductive hypothesis, there exists an edge $e$ that can be deleted from $M'$. We claim this same edge in $\widetilde{M}_f$ is not necessary for parking via $s$.

Let $T$ be the digraph obtained by deleting $e$ from $\widetilde{M}_f$ and $T'$ be obtained by deleting $e$ from $M'$. Since $(T',s')$ is a parking function, we know for any $v \in T'$, we have $|\{j : s'_j \in R_{T'}(v)\}| \leq |R_{T'}(v)|$. We now check the vertices of $T$ to determine if $(T,s)$ is a parking function.

\txbf{Case 1}: $i-1 \prec_T v$. We have $$|\{j : s_j \in R_T(v)\}| = |\{j : s_j \in R_{T'}(v)| \leq |R_{T'}(v)| =|R_{T}(v)|.$$

\txbf{Case 2}: $v \prec_T i$. Then, $$|\{j : s_j \in R_T(v)\}| = |\{j : s_j \in R_{T'}(v)|+1 \leq |R_{T'}(v)|+1 =|R_{T}(v)|.$$

\txbf{Case 3}: $v=i$ gives $$|\{j : s_j \in R_T(i)\}| = |\{j : s_j \in R_{T'}(i-1)|+1 \leq |R_{T'}(i-1)|+1 =|R_{T}(i)|.$$

\txbf{Case 4}: $v=i-1$. Using the fact that $-\alpha_i \leq -V_i$, we know

\begin{align*}
|\{j : s_j \in R_T(i-1)\}| &= |\{j : s_j \in R_{T'}(i-1)|+1-\alpha_i \\
& \leq |R_{T'}(i-1)|+1-V_i \\
&= (|R_T(i-1)| + V_i -1) +1 - V_i \\
&= |R_T(i-1)|.
\end{align*}

\txbf{Case 5}: all other $v$. Since $v$ is not a cycle vertex in $\widetilde{M}_f$ and $s$ is a parking function on $\widetilde{M}_f$, the deleting of $e$ does not affect the reachable set of $v$. Thus, 
\[
|\{j : s_j \in R_T(v)\}| = |\{j : s_j \in R_{\widetilde{M}_f}(v)\}| \leq |R_{\widetilde{M}_f}(v)| = |R_{T}(v)|.
\]
So $(T,s)$ is indeed a parking function and we know the edge $e$ is not necessary for parking on $\widetilde{M}_f$.
\end{proof}

Figure \ref{fig:contraction} gives an example of the contraction to $M'$ with $i=3$. We identify a deletable $e$ on $M'$, which gives a deletable $e$ on $\widetilde{M}_f$. 

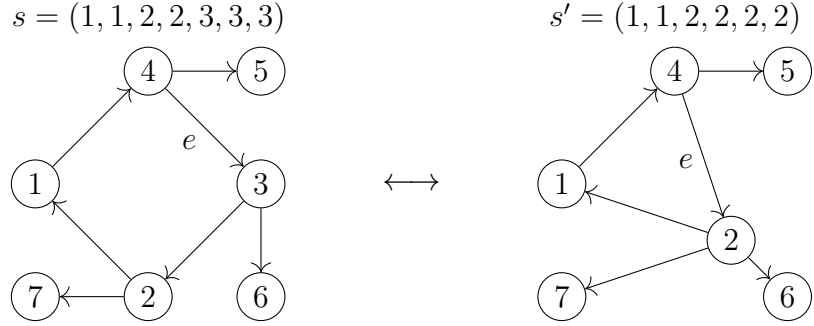
\begin{figure}
\begin{center}
\begin{tikzpicture}
\node[circle,draw,inner sep=3pt] (a) at (0,1.5) {$4$};
\node[circle,draw,inner sep=3pt] (b) at (1.5,0) {$3$};
\node[circle,draw,inner sep=3pt] (c) at (0,-1.5) {$2$};
\node[circle,draw,inner sep=3pt] (d) at (-1.5,0) {$1$};
\node[circle,draw,inner sep=3pt] (e) at (1.5,1.5) {$5$};
\node[circle,draw,inner sep=3pt] (f) at (1.5,-1.5) {$6$};
\node[circle,draw,inner sep=3pt] (g) at (-1.5,-1.5) {$7$};

\draw[arrows={->[scale=1.5]}] (a) -- (b);
\draw[arrows={->[scale=1.5]}] (a) -- (e);
\draw[arrows={->[scale=1.5]}] (b) -- (c);
\draw[arrows={->[scale=1.5]}] (b) -- (f);
\draw[arrows={->[scale=1.5]}] (c) -- (d);
\draw[arrows={->[scale=1.5]}] (c) -- (g);
\draw[arrows={->[scale=1.5]}] (d) -- (a);

\node (i) at (.55,.55) {$e$};
\node (j) at (7.15,0.3) {$e$};

\node (l) at (0,2.2) {$s=(1,1,2,2,3,3,3)$};
\node (m) at (3.5, 0) {$\longleftrightarrow$};
\node (t) at (7,2.2) {$s'=(1,1,2,2,2,2)$};

\node[circle,draw,inner sep=3pt] (n) at (7,1.5) {$4$};
\node[circle,draw,inner sep=3pt] (o) at (7.75,-0.75) {$2$};
\node[circle,draw,inner sep=3pt] (p) at (5.5,0) {$1$};
\node[circle,draw,inner sep=3pt] (q) at (8.5,1.5) {$5$};
\node[circle,draw,inner sep=3pt] (r) at (8.5,-1.5) {$6$};
\node[circle,draw,inner sep=3pt] (s) at (5.5,-1.5) {$7$};

\draw[arrows={->[scale=1.5]}] (n) -- (o);
\draw[arrows={->[scale=1.5]}] (o) -- (p);
\draw[arrows={->[scale=1.5]}] (p) -- (n);
\draw[arrows={->[scale=1.5]}] (n) -- (q);
\draw[arrows={->[scale=1.5]}] (o) -- (r);
\draw[arrows={->[scale=1.5]}] (o) -- (s);
\end{tikzpicture}
\caption{Identifying a deletable edge $e$ by contracting $(3,2)$.}
\label{fig:contraction}
\end{center}
\end{figure}

We now use Lemma \ref{lem:cycles} to prove

\begin{theorem}
For $n \geq 1$, we have the relationship
\[
n \cdot \widetilde{F}_{n,n} = \widetilde{M}_{n,n}.
\]
\end{theorem}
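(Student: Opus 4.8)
The plan is to build a bijection between the set of $(n,n)$-parking functions $(\widetilde{M}_f,s)$ on inverse mapping digraphs and the set of triples $(\widetilde{T},s,v)$ where $(\widetilde{T},s)$ is an $(n,n)$-parking function on a source tree and $v\in[n]$. Since the latter set has cardinality $\sum_{(\widetilde{T},s)}n=n\,\widetilde{F}_{n,n}$ and the former has cardinality $\widetilde{M}_{n,n}$, this proves the identity. Every ``is a parking function'' assertion below is checked purely through the reachable-set inequalities of Corollary~\ref{cor:hall}, which keeps these checks routine: deleting the cycle edges provided by Lemma~\ref{lem:cycles} preserves the inequalities by that lemma, and \emph{adjoining} edges preserves them because passing from $D$ to a supergraph $D'$ only enlarges each $R_D(B)$ --- formally, for any $B$ put $B_0=R_{D'}(B)$; then $R_D(B_0)=R_{D'}(B)$, so the $D$-inequality at $B_0$ is exactly the $D'$-inequality at $B$.

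\emph{Forward map.} Start from an $(n,n)$-parking function $(\widetilde{M}_f,s)$. The components of $\widetilde{M}_f$ are reachability-independent, and since $m=n$ exactly $|V(K)|$ of the cars prefer spots of each component $K$, so $s$ restricts to a parking function on each component. Apply Lemma~\ref{lem:cycles} inside each component; fixing a \emph{deterministic} version of the procedure in its proof (say, always contract at the least vertex $i$ with $\alpha_i\ge V_i$) makes the deleted edge well defined. This leaves a spanning source \emph{forest} $\widetilde{F}$ on which $s$ is still a parking function, together with, for each component, a ``seam'' vertex $\sigma_K$ --- the tail of the deleted edge (the root of $K$ when that component's cycle had length $1$). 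Now list the tree-components $K_1,\dots,K_c$ so that $\min(V(K_1))<\dots<\min(V(K_c))$, let $\rho_j$ be the root of $K_j$, adjoin the edges $\sigma_{K_j}\to\rho_{j+1}$ for $1\le j\le c-1$, and set $v:=\sigma_{K_c}$. The result $\widetilde{T}$ is connected and acyclic with unique source $\rho_1$, hence a source tree, and $(\widetilde{T},s)$ is a parking function by the opening paragraph. Output $(\widetilde{T},s,v)$.

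\emph{Inverse map and bijectivity.} Given $(\widetilde{T},s,v)$, the key structural fact is that the split vertices $\rho_2,\dots,\rho_c$ all lie on the directed path from the root of $\widetilde{T}$ to $v$ and occur in that order, because $K_j\subseteq \widetilde{T}_{\rho_j}$ for every $j$. Hence one recovers them greedily: at each stage take the component $K_1,K_2,\dots$ of the smallest not-yet-used label, which forces the next split vertex, then recurse on the remaining subtree with the same mark $v$; the leftover seam is exactly $v$. Deleting the recovered merge edges returns the forest $\widetilde{F}$ with its seams, and re-inserting into each component the edge running from its seam to its root returns a digraph with all in-degrees equal to $1$, i.e.\ an inverse mapping digraph $\widetilde{M}_f$ on which $s$ is again a parking function. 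Because the reconstructed components coincide exactly with the original ones and the procedure in Lemma~\ref{lem:cycles} is deterministic, it re-selects precisely the re-inserted edges, so the two maps are mutually inverse; the degenerate cases ($c=1$, where $v$ is just the tail of the single re-inserted cycle edge, and length-$1$ cycles) and the case $n=1$ are immediate.

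I expect the main obstacle to be making the two ``canonical'' procedures above genuinely well defined and checking that they undo each other: pinning down a deterministic selection rule inside the purely existential Lemma~\ref{lem:cycles} and verifying that the index bookkeeping (the ``$i-1$'' step, the length-$1$ cycles) survives it, and proving that in the inverse map exactly one choice of split vertex is consistent at each greedy stage. Once the case $m=n$ is settled, the general identity $n\,\widetilde{F}_{n,m}=\widetilde{M}_{n,m}$ follows by an analogous, somewhat more intricate argument.
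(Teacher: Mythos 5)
Your overall framework is sound and is essentially the paper's bijection run in the opposite direction: both arguments count triples $(\widetilde{T},s,v)$ against pairs $(\widetilde{M}_f,s)$, both rely on Lemma \ref{lem:cycles} to locate a dispensable cycle edge in each component, and your observation that adjoining edges only enlarges reachable sets (hence preserves the inequalities of Corollary \ref{cor:hall}) is correct and is exactly how the easy half of each verification goes. The gap is in making the two selection rules agree. On the digraph side you choose the deleted cycle edge by an essentially arbitrary ``canonical'' tie-breaking of Lemma \ref{lem:cycles}, while on the tree side you recover the split vertices greedily from the vertex labels; you never prove that these two rules invert each other, and in general they do not. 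Concretely, take $n=3$, $s=(1,2,2)$, and the tree $1\rightarrow 2\rightarrow 3$ with $v=3$. The label-ordering constraint $\min V(K_1)<\dots<\min V(K_c)$ admits at least two decompositions yielding valid inverse-mapping parking functions: cutting nothing (re-inserting $3\rightarrow 1$ gives the $3$-cycle, on which $s$ parks) and cutting the edge $(1,2)$ (giving a loop at $1$ together with the $2$-cycle on $\{2,3\}$, on which $s$ also parks). So ``the smallest not-yet-used label forces the next split vertex'' is false: the number and the positions of the cuts are genuinely underdetermined by labels alone. Dually, the $3$-cycle with $s=(1,2,2)$ has two deletable edges, $(3,1)$ and $(1,2)$; if your canonical rule deletes $(3,1)$, then the $3$-cycle and the two-component digraph above are both sent to the same triple $\lp 1\rightarrow 2\rightarrow 3,\,(1,2,2),\,3\rp$ and the forward map is not injective, while if it deletes $(1,2)$ the round trip starting from that triple returns the wrong digraph.

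This ambiguity is precisely what the paper's proof is organized around. There the choice of which edges to break is not arbitrary but is dictated by $s$ itself: the set $A$ consists of the vertices on the root-to-$v$ path whose subtrees are exactly full, the subset $B$ consists of those whose first appearance in $s$ (their ``rank'') comes after that of all their ancestors in $A$, and the components are chained in rank order rather than label order. Remark \ref{rmk:B} --- that the vertex of $B$ in each cycle has maximal rank among that cycle's vertices --- is the invariant that allows $\hat{B}$ to be recovered from $(\widetilde{M}_f,s)$ alone and guarantees that the two directions undo each other. To repair your argument you would need an analogous $s$-dependent invariant that identifies, from the tree side, exactly which edges of the root-to-$v$ path were inserted and, from the digraph side, exactly which cycle edge was deleted; without it the map is not well defined, and the concluding reduction of the general $m\le n$ case also needs the explicit extension of $s$ to $s'$ given in the paper rather than an appeal to analogy.
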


\begin{proof}[Construction of the bijection]
Let $(\widetilde{T},s)$ be a parking function for source tree $\widetilde{T}$, and pick $v \in V(\widetilde{T})$. We define a bijection $\psi$ such that $\psi\lp(\widetilde{T},s,v)\rp=(\widetilde{M}_f,s)$ for some appropriate inverse mapping digraph $\widetilde{M}_f$, constructed by identifying edges in $\widetilde{T}$ that can be manipulated without affecting the ability of the cars to park. The sequence $s$ will not change.

Let $(u,w)$ be an edge in $\widetilde{T}$. If $|\{i:s_i \in \widetilde{T}_w\}| = |\widetilde{T}_w|$, then for any successful parking, no car may cross $(u,w)$ as otherwise too many cars would attempt to park in the subtree $\widetilde{T}_w$. Additionally, at least one driver must prefer $w$, as one driver must park in $w$ and no driver may use the edge $(u,w)$. These two observations will allow us to select edges to manipulate in $\widetilde{T}$.

Consider the path $\text{root}(\widetilde{T}) = v_1 \rightarrow v_2 \rightarrow \hdots \rightarrow v_k = v$ for some $k \geq 1$. We first identify the edges that are freely manipulatable, then we use the order of $s$ to choose a subset of those. For $1 \leq i \leq k$, let $v_i \in A$ if and only if $|\{j : s_j \in \widetilde{T}_{v_{i}}\}| = |\widetilde{T}_{v_{i}}|$. Since $\widetilde{T}_{v_{1}} = \widetilde{T}$, $A$ is nonempty. By the second observation above, all $v_i \in A$ appear as preferences in $s$. The edge $(v_{i-1},v_i)$ is not used by any driver, so we may manipulate it (even delete it) without affecting parking. Next, for the $v_i \in A$, we define the rank $d(v_i)$ to be the index of the first appearance of $v_i$ in $s$. We now let $B \subseteq A$ be given by the elements $\{v_i : \forall j<i, d(v_j) < d(v_i)\}.$ That is, the elements of $B$ are those in $A$ that appear in $s$ after their ancestors from $A$. Note that $B \neq \emptyset$ as root$(\widetilde{T})=v_1 \in B$. 

Consider the unique sequence $\{v_{i_{j}}\}_{j=1}^{|B|}$ such that $v_{i_j} \in B$ and $i_j < i_{j+1}$. For $j > 1$, remove the edge $(v_{i_{j}-1},v_{i_{j}})$ and add the edge $(v_{i_{j}-1},v_{i_{(j-1)}})$. Finally, add the edge $(v,v_{i_{|B|}})$ (if $v$ is the root, this will be a loop as then $v=v_1$). The resulting graph is an inverse mapping digraph, $\widetilde{M}_f$, where $f(i)$ is the unique $j$ such that $(j,i)$ is an edge. In particular, the edges that were manipulated and the one that was added are $\{(f(v_{i_{j}}),v_{i_{j}})\}_{j=1}^{|B|}$.
\end{proof}

\begin{figure}[h]
\begin{center}
\begin{tikzpicture}
\node[circle,draw,inner sep=1pt] (a) at (-1,0) {$3$};
\node[circle,draw,inner sep=1pt] (b) at (-1.8,-0.8) {$6$};
\node[circle,draw,inner sep=1pt] (c) at (-0.2,-0.8) {$1$};
\node[circle,draw,inner sep=1pt] (d) at (-1,-1.6) {$2$};
\node[circle,draw,inner sep=1pt] (e) at (-0.2,-1.6) {$7$};
\node[circle,draw,inner sep=1pt] (f) at (0.6,-1.6) {$4$};
\node[circle,draw,inner sep=1pt,fill=yellow] (g) at (0.6,-2.4) {$5$};
\node (h) at (2,1) {$s = (2,3,4,1,3,5,1), v=5$};

\draw[arrows={->[scale=1.5]}] (a) -- (b);
\draw[arrows={->[scale=1.5]},dotted] (a) -- (c);
\draw[arrows={->[scale=1.5]}] (c) -- (d);
\draw[arrows={->[scale=1.5]}] (c) -- (e);
\draw[arrows={->[scale=1.5]},dotted] (c) -- (f);
\draw[arrows={->[scale=1.5]},dotted] (f) -- (g);

\node (i) at (2.5,-1.5) {$\longrightarrow$};

\node[circle,draw,inner sep=1pt] (j) at (5,0) {$3$};
\node[circle,draw,inner sep=1pt] (k) at (4.2,-0.8) {$6$};
\node[circle,draw,inner sep=1pt] (l) at (5.8,-0.8) {$1$};
\node[circle,draw,inner sep=1pt] (m) at (5,-1.6) {$2$};
\node[circle,draw,inner sep=1pt] (n) at (5.8,-1.6) {$7$};
\node[circle,draw,inner sep=1pt] (o) at (6.6,-1.6) {$4$};
\node[circle,draw,inner sep=1pt] (p) at (6.6,-2.4) {$5$};

\draw[arrows={->[scale=1.5]}] (j) -- (k);
\draw[arrows={->[scale=1.5]}] (j) to [out=330,in=45,loop] (j);
\draw[arrows={->[scale=1.5]}] (l) -- (o);
\draw[arrows={->[scale=1.5]}] (l) -- (m);
\draw[arrows={->[scale=1.5]}] (l) -- (n);
\draw[arrows={->[scale=1.5]}] (o) to [out=45,in=30] (l);
\draw[arrows={->[scale=1.5]}] (p) to [out=330,in=45,loop] (p);

\end{tikzpicture}
\caption{Turning a source tree into an inverse mapping digraph.}
\label{fig:sourcebijection}
\end{center}
\end{figure}
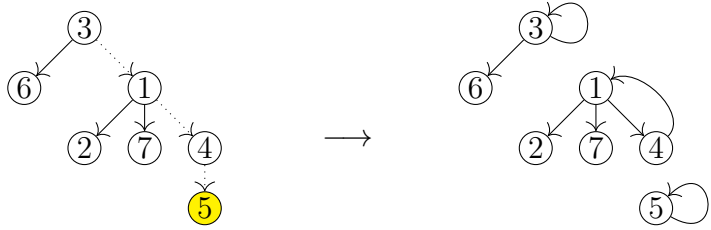

Figure \ref{fig:sourcebijection} gives an example of $\psi$. In it, $A = \{1,3,4,5\}$ and $B =\{1,3,5\}$, with $v_{i_{1}}=3$, $v_{i_{2}}=1$, and $v_{i_{3}}=5$.

\begin{remark}\label{rmk:B}
In each component, the cycle vertex appearing in $B$ has the highest rank of all other cycle vertices in that component. Further, if an edge was necessary for parking on $\widetilde{T}$, it is still necessary for parking on $\widetilde{M}_f$.
\end{remark}

For the inverse, we know by Lemma \ref{lem:cycles} that at least one edge in each cycle of the graph is not necessary for parking. We let the set $\hat{A}$ be the set of vertices in the cycles of $\widetilde{M}_f$ that are the terminal vertices of an edge that is not necessary for parking. Define $\hat{B}\subseteq \hat{A}$ as the set of vertices which have the highest rank in each cycle. By Remark \ref{rmk:B}, if $(\widetilde{M}_f,s) = \psi \lp (\widetilde{T},s,v)\rp$, we know $B =\hat{B}$. Label these elements of $\hat{B}$ by $\{b_i\}_{i=1}^{|\hat{B}|}$ such that $d(b_1) < d(b_2) < \hdots < d(b_{|\hat{B}|})$. For $1 \leq i \leq |\hat{B}|-1,$ remove the edge $\lp f(b_i),b_i \rp$ and add the edge $\lp f(b_i), b_{i+1} \rp$. Finally, delete the edge $\lp f(b_{|\hat{B}|}) , b_{|\hat{B}|} \rp$ and mark $f(b_{|\hat{B}|})$. The resulting tree is $\widetilde{T}$, so $\psi^{-1}\lp (\widetilde{M}_f , s) \rp = (\widetilde{T}, s, f(b_{|\hat{B}|})).$

As promised, we can extend this result to $(n,m)$-parking functions.

\begin{theorem}
Let $n \in \N$ and $0 \leq m \leq n$. Then
\[
n\cdot \widetilde{F}_{n,m} = \widetilde{M}_{n,m}.
\]
\end{theorem}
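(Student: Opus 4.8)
The plan is to extend the bijection $\psi$ of the previous theorem from full sequences to sequences $s \in [n]^m$ with $m < n$. Given a triple $(\widetilde{T}, s, v)$ with $\widetilde{T}$ a source tree on $[n]$, $v$ a vertex of it, and $s$ an $(n,m)$-parking function on $\widetilde{T}$, I would again read off the path $\operatorname{root}(\widetilde{T}) = v_1 \to v_2 \to \cdots \to v_k = v$, single out the subtrees $\widetilde{T}_{v_i}$ that are saturated by $s$ — whose incoming edges carry no car in any successful parking and whose roots must be preferred by some car, by the same two observations used in the $m=n$ construction — and redirect or delete exactly those incoming edges before closing up with an edge leaving $v$. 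Since $\psi$ never alters $s$, proving that this is a bijection between the set of triples $(\widetilde{T}, s, v)$ and the set of pairs $(\widetilde{M}_f, s)$, ranging over all source trees, all marked vertices, and all $(n,m)$-parking functions $s$, gives $n \cdot \widetilde{F}_{n,m} = \widetilde{M}_{n,m}$ at once, the factor of $n$ coming from the choice of $v$.

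The one genuinely new feature when $m < n$ is that the whole tree $\widetilde{T} = \widetilde{T}_{v_1}$ is no longer saturated by $s$, so $v_1$ need not lie in the set $A$ and, more awkwardly, $\operatorname{root}(\widetilde{T})$ need not occur in $s$ at all. The remedy I would use is to adjoin $v_1$ to $A$ by convention and declare it the earliest-ranked element, so it becomes the first element $v_{i_1}$ of $B$; the chaining step together with the closing step still equips $v_1$ with an incoming edge — from the vertex just before $v_{i_2}$ on the path, or from $v$ itself when $B = \{v_1\}$ — so that the output is again a legitimate inverse mapping digraph on which $s$ parks. Checking that each edge redirection preserves the parking property of $s$ is the same case analysis against the inequalities of Corollary \ref{cor:hall} as in the $m=n$ construction, now carrying $n-m$ units of slack around the root's component instead of having every inequality tight, and I do not anticipate trouble there.

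The argument also needs the $(n,m)$-analogue of Lemma \ref{lem:cycles}, that every cycle of $\widetilde{M}_f$ contains an edge deletable for $s$; this goes through by the same induction on cycle length, the only change being that the step which, for $m=n$, produced a cycle vertex $i$ with $\alpha_i \geq V_i$ is replaced by a cycle-lemma observation: writing $\delta_j = V_j - \alpha_j$ for the deficits of the $r$ cycle vertices, whose cyclic sum equals the component's nonnegative slack, some cyclic rotation has all partial sums at most that slack, and deleting the incoming edge at the corresponding vertex keeps every Corollary \ref{cor:hall} inequality valid. I expect the real obstacle of the whole proof to be exactly the inverse map when $m < n$: since $\operatorname{root}(\widetilde{T})$ need no longer be the highest-ranked vertex of its cycle — it may carry no rank, and several cycle vertices of a component can be absent from $s$ — the rule defining $\hat{B}$ and the recovery of $\operatorname{root}(\widetilde{T})$ must be revised, and one has to confirm that the revised rule, together with the convention of inserting $v_1$ as the earliest element of $B$, makes $\psi$ and its inverse mutually inverse in every configuration.
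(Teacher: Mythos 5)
There is a genuine gap, and it sits exactly where you predict it will: the inverse map. Your forward construction rests on the convention of adjoining $v_1=\operatorname{root}(\widetilde{T})$ to $A$ and ``declaring it the earliest-ranked element,'' but a declaration made on the tree side is worthless unless it can be re-derived from the pair $(\widetilde{M}_f,s)$ alone. When $m<n$ the root's component of $\widetilde{M}_f$ need not be saturated, so the root may not occur in $s$, several cycle vertices of that component may not occur in $s$, and in the extreme (small $m$) an entire component can be absent from $s$. Your rule for $\hat{B}$ --- ``highest rank in each cycle among termini of deletable edges'' --- is then undefined or ambiguous, and even if some tie-breaking is imposed, you have given no way to decide (a) which component of $\widetilde{M}_f$ contains the image of the root and (b) which cycle vertex of that component is the root, both of which are needed to undo the chaining and recover $(\widetilde{T},s,v)$. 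Since the entire content of the theorem is that $\psi$ extends to a bijection, ``one has to confirm that the revised rule\dots makes $\psi$ and its inverse mutually inverse'' is precisely the statement to be proved, not a loose end; as written the proposal does not establish injectivity or surjectivity. (Your cycle-lemma sketch for the $(n,m)$-analogue of Lemma~\ref{lem:cycles} is plausible but also only sketched; it is not the main problem.)

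The paper avoids all of this by taking a different route: it does not generalize the construction of $\psi$ at all, but reduces to the $m=n$ case. Given $(\widetilde{T},s,v)$, the $m$ drivers are parked by a canonical deterministic rule (each driver parks at the smallest-labeled admissible spot reachable using a minimal number of edges of the path $\operatorname{root}(\widetilde{T})\rightarrow v$), the unoccupied vertices are appended to $s$ in increasing order to form a full parking function $s'\in[n]^n$, and then the already-established bijection $\psi$ is applied to $(\widetilde{T},s',v)$. Because the extension rule is phrased intrinsically in terms of the path edges, which become exactly the cycle edges of $\widetilde{M}_f$, the same rule applied on the mapping side (minimizing the use of cycle edges) reproduces $s'$ from $s$, so the padding can be stripped off and the whole process reversed. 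If you want to salvage your direct approach, you would need to supply an explicit, $s$-independent-of-the-tree rule singling out one cycle vertex per component (and one distinguished component) that provably agrees with your forward convention in every configuration; the paper's padding trick is the cleaner way to sidestep that.
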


\begin{proof}
Let $s \in [n]^m$ be a parking function on $\widetilde{T} \in \widetilde{\mathcal{T}}_n$, and let $v \in V(\widetilde{T})$. Our goal is to extend $s$ to $s' \in [n]^n$ in a reversible manner, then apply $\psi$. We must do so in a way that is not affected by the change in edges caused by $\psi$, which suggests we avoid using edges along the path root$(\widetilde{T}) \rightarrow v$. 

To this end, drivers choose to park as follows. We recursively define $\{A_i\}_{i=1}^m$ so that $A_1 = \{s_1\}$ and in general $A_i$ are the spots that driver $i$ could park at so that the remaining drivers may successfully park, given the first $i-1$ drivers are parked. Let $B_i \subseteq A_i$ be the vertices of $A_i$ that are reachable from $s_i$ by utilizing a minimal number of edges in the path root$(\widetilde{T}) \rightarrow v$. From there, driver $i$ parks at the vertex with label $\min(B_i)$. Once driver $i$ is parked, we construct $A_{i+1}$ and continue until all drivers have parked.

Let $\{x_i\}_{i=1}^{n-m}$ be the unoccupied spaces after the drivers have parked in this manner, ordered in an increasing manner. We then define $s'$ as follows:
\[
s_i' = 
\begin{cases}
s_i &\tx{ if } i \leq m \\
x_j &\tx{ if } i = m+j
\end{cases}
\]

Then, we apply $\psi$ to $(\widetilde{T},s',v)$. Since $s'$ does not change under $\psi$ and is a parking function, $s$ is also a parking function on the resulting mapping digraph $\widetilde{M}_f$. In order to reverse, we must be able to extend $s$ to $s'$ on $\widetilde{M}_f$. Because the edges on the path between root$(\widetilde{T})$ and $v$ become the cycle edges, drivers park as defined in the first paragraph, but instead of utilizing a minimal number of path edges, they use a minimal number of cycle edges.
\end{proof}

\section{Concluding Remarks and Future Research} \label{sec:conclusion}

In this paper, we gave several equivalent characterizations of an extension of the ``drivers searching for a parking space'' description of parking functions to digraphs. Additionally, we follow the work of Lackner and Panholzer to show many of their results on trees with edges oriented towards a root still hold when the edge orientation is reversed. Furthermore, we showed that source trees never had fewer $(n,n)$-parking functions than sink trees.

We propose here some generalizations of on other notions of classical parking functions. Prime parking functions were defined by Gessel \cite{EC2} and can be understood as classical parking functions for which every edge in the path is necessary for parking. More formally,

\begin{definition}
A classical parking function $s \in [n]^n$ is \emph{prime} if, for all $2 \leq i \leq n$, we have
\[
|\{j:s_j \geq i \}| < n-i+1.
\]
\end{definition}
So let us say
\begin{definition}
A parking function $(D,s)$ is called \emph{prime} if, for every $A \subseteq [n]$ such that $R_D(A) \neq [n]$, we have
\[
|\{C_i : s_i \in R_D(A)\}| < |R_D(A)|.
\]
\end{definition}

In fact, the parking functions for which every edge is used by a driver after failing to park at her desired spot described in the proof of Theorem \ref{thm:treenumber} are prime parking functions. See \cite{KING2019} for more about prime parking functions on sink trees.

Another type of interesting parking function are the \emph{increasing} parking functions, those for which $s_i \leq s_{i+1}$ for all $i$. For the classical case, the set of increasing parking functions is counted by the famous Catalan numbers and is easily shown to be in bijection with Dyck paths of semilength $n$. We follow the terminology of \cite{BUTLER2017} and call the generalization on digraphs \emph{parking distributions}, to focus on the distribution of driver preference rather than the ordering of the sequence. Here, $f(i)$ may be understood as the number of drivers preferring vertex $i$.

\begin{definition}
Let $f: [n] \rightarrow [m]_0$ such that $\sum_{i=1}^nf(i)=m$. Then we say $(D,f)$ is a \emph{parking distribution} if for all $A \subseteq [n]$, we have
\[
\sum\limits_{i \in R_{D}(A)}f(i) \leq |R_D(A)|.
\]
\end{definition}

We present several avenues for future research:

\begin{enumerate}
\item We are interested in exact counts of $P(D,m)$ on specific digraphs or sums over families of digraphs. In particular, we do not know $\widetilde{F}_{n,m}$.
\item We are also interested in a formula more specific than that given in Theorem \ref{thm:treenumber}, describing the relationship between $P(T,n)$ and $P(\widetilde{T},n)$.
\item As noted, it is possible for some $m<n$ and $T$ to have $P(T,m) > P(\widetilde{T},m)$. We ask for a characterization of when this occurs.
\item Both the path that classical parking functions are defined on and sink trees as a family support interesting numbers of parking functions. Are there other digraphs or families of digraphs which are associated with particularly nice numbers of (prime) parking functions or parking distributions?
\item We ask how one may extend several of the statistics defined on classical parking functions, such as the number of ``lucky'' drivers, those who park in their preferred spot, or the ``total displacement'', the distance driven by all drivers. In general, the locations in which drivers park is not well-defined, but perhaps we could get around this by considering a maximum or minimum over all successful parking outcomes.
\end{enumerate}

\section*{Acknowledgments}

The authors would like to thank Professor Chun-Hung Liu for suggesting the inductive proof of Lemma \ref{lem:cycles}, which simplified our original argument.

\bibliographystyle{abbrv}
\bibliography{SourceReferences}

 
\end{document}